
\documentclass{AIMS}
 \usepackage{amsmath}
 \usepackage{graphics} 
 \usepackage{epsfig} 

  \usepackage{hyperref} 

  \textheight=8.2 true in
   \textwidth=5.0 true in
    \topmargin 30pt
     \setcounter{page}{1}


\newtheorem{theorem}{Theorem}[section]
\newtheorem{corollary}[theorem]{Corollary}

\newtheorem{lemma}[theorem]{Lemma}

\theoremstyle{definition}
\newtheorem{definition}[theorem]{Definition}
\newtheorem{remark}[theorem]{Remark}

\title[Rate of convergence of periodic solutions]
      {On the rate of convergence of periodic solutions in perturbed autonomous systems as the perturbation vanishes }

\author[Oleg Makarenkov, Paolo Nistri]{}

\subjclass{Primary: 34A10, 34C24; Secondary: 34K13}
 \keywords{Autonomous systems, limit
cycles, characteristic multipliers, periodic solutions, rate of
convergence}

\email{omakarenkov@math.vsu.ru; pnistri@dii.unisi.it}

\thanks{The first author is supported by the Grant BF6M10 of Russian Federation Ministry of Education and CRDF (US),
and by RFBR Grants 07-01-00035, 06-01-72552, 05-01-00100. The
second author is supported by GNAMPA and the national research
project PRIN ``Control, Optimization and Stability of Nonlinear
Systems: Geometric and Topological Methods''.}

\newcommand{\epsi}{\varepsilon}

\begin{document}
\maketitle

\centerline{\scshape Oleg Makarenkov}
\medskip
{\footnotesize
 \centerline{Research Institute of Mathematics, Voronezh State University}
  \centerline{Ul. Universitetskaja pl. 1}
   \centerline{394006, Voronezh, Russia}
} \medskip\centerline{\scshape Paolo Nistri}
\medskip
{\footnotesize
 \centerline{Dipartimento di Ingegneria dell' Informazione, Universit\`a di Siena}
  \centerline{Via Roma, 56}
   \centerline{53100, Siena, Italy}
}

\medskip

 \centerline{(Communicated by Aim Sciences)}
 \medskip

\begin{abstract}
We consider an autonomous system in $\mathbb{R}^n$ having a limit
cycle $ x_0$ of period $T>0$ which is nondegenerate in a suitable
sense. We then consider the perturbed system obtained by adding to
the autonomous system a $T$-periodic, not necessarily
differentiable, term whose amplitude tends to $0$ as a small
parameter $\varepsilon>0$ tends to $0.$ Assuming the existence of a
$T$-periodic solution $x_\varepsilon$ of the perturbed system and
its convergence to $ x_0$ as $\varepsilon\to 0$, the paper
establishes the existence of $\Delta_{\varepsilon}\to 0$ as
$\epsi\to 0$ such that $\|x_\epsi(t+\Delta_\epsi)-x_0(t)\|\le\epsi
M$ for some $M>0$ and any $\epsi>0$ sufficiently small.
 This paper completes the
work initiated by the authors in \cite{nach} and \cite{jmaa}.
Indeed, in \cite{nach} the existence of a family of $T$-periodic
solutions $x_\varepsilon$ of the perturbed system considered here
was proved. While in \cite{jmaa} for perturbed systems in
$\mathbb{R}^2$ the rate of convergence was investigated by means
of the method considered in this paper.

\end{abstract}

\section{Introduction}
Assume that the perturbed autonomous system
\begin{equation}\label{ps}
   \dot x=f(x)+\varepsilon g(t,x,\epsi), \qquad
   x\in\mathbb{R}^n,\;\; t\in\mathbb{R},
\end{equation}
possesses a family of $T$-periodic solutions
$\{x_\epsi\}_{\epsi\in(0,1]}$ such that
\begin{equation}\label{convp}
  x_\varepsilon(t)\to {x}_0(t)\mbox{\quad as\ }\varepsilon\to
  0
\end{equation}
uniformly with respect to $t\in\mathbb{R},$ where ${x}_0$ is a limit
cycle of period $T>0$ of the system
\begin{equation}\label{np}
   \dot x=f(x).
\end{equation}
The following system is an example of (\ref{ps}) having a family of
$2\pi$-periodic solutions $\{x_\epsi\}_{\epsi\in(0,1]}$ satisfying
(\ref{convp}), where the $2\pi$-periodic limit cycle ${x}_0$ is
represented by the circumference centered at the origin with radius
$1$.
$$
\begin{array}{l}
  \dot
  x_1=x_2-x_1\left(x_1^2+x_2^2-(1+\epsi)^2\right)+\epsi\left((1+\epsi)\sin(t-\sqrt{\epsi})-x_1\right),\\
  \dot x_2=-x_1-x_2\left(x_1^2+x_2^2-(1+\epsi)^2\right).
\end{array}
$$
In fact, as it is easy to see, for $\varepsilon>0$ it has the
$2\pi$-periodic solution
$$
  x_\epsi(t)=\left(\begin{array}{l}
     (1+\epsi)\sin(t-\sqrt{\epsi})\\
     (1+\epsi)\cos(t-\sqrt{\epsi})
     \end{array}\right)
$$
which, for any $t\in\mathbb{R},$ converges to $
{x}_0(t)=\left(\begin{array}{l}
     \sin t\\
     \cos t
     \end{array}\right)
$
when $\epsi\to 0.$ This example shows that the rate of
convergence in (\ref{convp}) can be less than $\varepsilon>0$,
indeed
$$
  \frac{\left\|x_\epsi(t)- {x}_0(t)\right\|}{\epsi}=
  \frac{1}{\epsi}\left\|\left(\begin{array}{l}
     \sin(t-\sqrt{\epsi})-\sin t\\
     \cos(t-\sqrt{\epsi})-\cos t
     \end{array}\right)+\varepsilon\left(\begin{array}{l}
     \sin(t-\sqrt{\epsi})\\
     \cos(t-\sqrt{\epsi})
     \end{array}\right)\right\|\to\infty
$$
as $\epsi\to 0.$

On the other hand the example also suggests that a suitable shift
in time in $x_\epsi$ gives convergence at the rate $\epsi.$ In
fact, we have that
\begin{equation}\label{sqr}
  \frac{\left\|x_\epsi(t+\sqrt{\epsi})- {x}_0(t)\right\|}{\epsi}=1\quad{\rm
  for\ any\ } t\in [0, 2\pi] \quad{\rm and\ any\ }\epsi>0.
\end{equation}

In this paper we show that the situation described by the above
example occurs in general. Namely we prove that, given a family of
$T$-periodic solutions $\{x_\epsi\}_{\epsi\in(0,1]}$ to (\ref{ps})
satisfying (\ref{convp}), it is always possible to find a suitable
family of shifts
$\left\{\Delta_\epsi\right\}_{\epsi>0}\subset\mathbb{R}$ satisfying
\begin{equation}\label{sqr1}
  \frac{\left\|x_\epsi(t+\Delta_\epsi)- {x}_0(t)\right\|}{\epsi}\le const\quad{\rm
  for\ any\ }t\in[0,T]{\rm\ and\ any\ }\epsi>0
\end{equation}
 provided that the limit cycle ${x}_0$ is
nondegenerate in the sense that the algebraic multiplicity of the
characteristic multiplier $+1$ of
\begin{equation}\label{ls}
   \dot y=f'({x}_0(t))y
\end{equation}
is equal to $1.$ In particular, our result implies that if $x_0$ is
a nondegenerate cycle of (\ref{np}) then the distance between the
sets $x_\epsi\left([0,T]\right)$ and $x_0\left([0,T]\right)$ is of
order $\epsi>0.$ Our result does not require differentiability of
$g,$ indeed here we assume that
\begin{equation}\label{reg0} \, f\in
C^1(\mathbb{R}^n,\mathbb{R}^n)\mbox{ \ \ and \ \ } g\in
C(\mathbb{R}\times\mathbb{R}^n\times [0,1],\mathbb{R}^n).
\end{equation}

\noindent This paper completes the existence and convergence
results of $T$-periodic solutions $x_\varepsilon$ of (\ref{ps})
proved in \cite{nach} under assumptions (\ref{reg0}). In fact, in
\cite{nach} we have observed in Remark~3.4 that the rate of
convergence of $x_\varepsilon([0,T])$ to ${x}_0([0,T])$ is of
order $\varepsilon^p$ with $0<p<1$. The convergence of
$x_\varepsilon([0,T])$ to ${x}_0([0,T])$ at rate $\epsi^1$ was
established in \cite{jmaa} for the case when $n=2,$ but instead of
$\Delta_\epsi$ we had $\Delta_\epsi(t)$ in (\ref{sqr1}). The
possibility of considering $\Delta_\epsi$ independent on time in
this paper is due to the considerable simplification of the
approach used in \cite{jmaa} that we have performed here.

\noindent The classical results on the existence and convergence
at rate $\epsi,$  of $T$-periodic solutions to equations of the
form  (\ref{ps}), where $\epsi>0$ is small, are due to Malkin
(\cite{mal}, Statement~p.~41) and Loud (\cite{loud}, Theorem~1)
where it is assumed that
\begin{equation}\label{reg1}
f\in C^2(\mathbb{R}^n,\mathbb{R}^n)\mbox{ \ \ and \ \ } g\in
C^1(\mathbb{R}\times\mathbb{R}^n\times [0,1],\mathbb{R}^n).
\end{equation}
Under less regularity assumptions the persistence of the limit cycle
$x_0$ is studied only for piecewise differentiable systems
(\ref{ps}), in fact in this case one can use the approach of A\u\i
zerman-Gantmaher \cite{ay}, Kolovski\u\i\ \cite{kol}, Lazer-McKenna
\cite{laz} and Ste\u\i nberg \cite{ste}. To our best knowledge
\cite{nach} and \cite{jmaa} are the first papers that provide
existence and convergence results of $T$-periodic solutions of
(\ref{ps}) bifurcating from a limit cycle $x_0$ under assumptions
(\ref{reg0}).

The paper is organized as follows. Section~2 is devoted to our main
result: Theorem~1 which states the validity of the inequality
(\ref{sqr1}). In Section~3 we apply (\ref{sqr1}) for studying some
further properties of convergence in (\ref{convp}), namely we
investigate
\begin{equation}\label{LIM}
  \lim_{\epsi\to 0}\frac{x_\epsi(t+\Delta_\epsi)-
  {x}_0(t)}{\epsi}
\end{equation}
by means of the first approximation. In particular, using only the
eigenfunctions $z$ of the adjoint system $\dot z=-(f'(x_0(t)))^*z$
and the function $g$ we give conditions ensuring that $
\lim_{\epsi\to
0}\left\|x_\varepsilon(\Delta_\varepsilon)-{x}_0(0)\right\|/\epsi=0
$ and we determine the signum of the angle between the vectors
$z(t)$ and $x_\varepsilon(t+\Delta_\varepsilon)- {x}_0(t).$  In the
smooth case (\ref{reg1}) these results may be derived, as discussed
in Remark~\ref{LOUD}, from the Loud's formula (\ref{lo}). Note that
formula (\ref{lo}) is established under the condition that a
suitably defined bifurcation function (\ref{biff}) has a simple
zero, our result does not require such a condition.

\vskip0.5truecm
\section{A formula for the distance
between the periodic solutions of the perturbed system and the
limit cycle of the unperturbed one}

\noindent In this Section we establish our main result, namely the
validity of (\ref{sqr1}). This result does not depend on the
perturbation term $g,$ indeed the only property we need is the
following.

\begin{definition} We say that the limit cycle ${x}_0$
of (\ref{np}) is nondegenerate
if the algebraic multiplicity\footnote{\label{fn}Let $Y(t)$ be the
normalized fundamental matrix of (\ref{ls}).  An eigenvalue $\rho$
of $Y(T)$ is called the characteristic multiplier of (\ref{ls}).
Since $\rho$ is a root of the corresponding characteristic
equation then it is possible to consider the multiplicity of this
root, which is called {\it algebraic multiplicity } of the
characteristic multiplier $\rho.$
System (\ref{ls}) always has at least one characteristic
multiplier $+1$ since from $\dot x_0(t)=f(x_0(t))$ we have $\ddot
x_0(t)=f'(x_0(t))\dot x_0(t).$} of the characteristic multiplier
$+1$ of (\ref{ls}) is equal to $1.$
\end{definition}

\noindent In order to introduce the family $\{\Delta_\epsi\}$ that
appears in (\ref{sqr1}) we define in what follows a suitable
surface $S\in C(\mathbb{R}^{n-1},\mathbb{R}^n).$ For this, let
$A_{n-1}$ be an arbitrary $n\times n-1$ matrix such that the
$n\times n$ matrix $(\dot x_0(0),A_{n-1})$ is nonsingular and
$\Omega(\cdot,t_0,\xi)$ is the solution of (\ref{np}) satisfying
$\Omega(t_0,t_0,\xi)=\xi.$ The surface $S$ is given by
\begin{equation}\label{suf}
\begin{array}{lll}
S(v)&=&\Omega(T,0,h(v)),\\
h(v)&=&{x}_0(0)+A_{n-1}v.
\end{array}
\end{equation}

\vskip0.3truecm The following result shows that the surface $S$
intersects $x_0$ transversally.

\begin{lemma}\label{conv_lem2}
Assume $f\in C^1(\mathbb{R}^n,\mathbb{R}^n).$ Let $x_0$ be a
nondegenerate $T$-periodic cycle of (\ref{np}). Then
$\dot{{x}}_0(0)\not\in S'(0)(\mathbb{R}^{n-1}).$
\end{lemma}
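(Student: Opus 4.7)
The plan is to compute $S'(0)$ via the chain rule, identify its image with $Y(T)A_{n-1}(\mathbb{R}^{n-1})$, and then exploit the fact that $\dot x_0(0)$ is a $+1$-eigenvector of $Y(T)$ to reduce the claimed transversality to the already-assumed nonsingularity of the matrix $(\dot x_0(0),A_{n-1})$.

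First I would differentiate $S(v)=\Omega(T,0,x_0(0)+A_{n-1}v)$ at $v=0$. Since $h'(v)\equiv A_{n-1}$, the chain rule gives $S'(0)=\partial_\xi\Omega(T,0,x_0(0))\,A_{n-1}$. The standard variational equation then shows that $W(t):=\partial_\xi\Omega(t,0,x_0(0))$ coincides with the normalized fundamental matrix $Y(t)$ of (\ref{ls}), as both satisfy $\dot W=f'(x_0(t))W$ with $W(0)=I$. Hence $S'(0)(\mathbb{R}^{n-1})=Y(T)\,A_{n-1}(\mathbb{R}^{n-1})$, and Liouville's formula guarantees that $Y(T)$ is invertible.

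Next, as recalled in the footnote, differentiating $\dot x_0(t)=f(x_0(t))$ shows that $\dot x_0$ solves (\ref{ls}), and $T$-periodicity gives $Y(T)\dot x_0(0)=\dot x_0(T)=\dot x_0(0)$. Suppose for contradiction that $\dot x_0(0)\in S'(0)(\mathbb{R}^{n-1})$. Then $Y(T)A_{n-1}v=\dot x_0(0)=Y(T)\dot x_0(0)$ for some $v\in\mathbb{R}^{n-1}$; cancelling the invertible $Y(T)$ forces $A_{n-1}v=\dot x_0(0)$, which is incompatible with the nonsingularity of $(\dot x_0(0),A_{n-1})$. This closes the argument.

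I do not anticipate any substantial obstacle: the lemma collapses to a short linear-algebra calculation once $S'(0)$ is written down. It is perhaps worth noting that the nondegeneracy of $x_0$ (algebraic multiplicity of the multiplier $+1$ equal to $1$) does not seem to enter this particular statement; only the invertibility of $Y(T)$ and the fact that $\dot x_0(0)$ is fixed by $Y(T)$ are needed, and both follow from periodicity alone. I expect that hypothesis to play its role later, when inverting a reduced map on the transverse section $S(\mathbb{R}^{n-1})$ in order to construct the shifts $\Delta_\epsi$ in (\ref{sqr1}).
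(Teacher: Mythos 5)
Your proof matches the paper's argument essentially verbatim: differentiate $S$ at $v=0$ via the chain rule to get $S'(0)=\Omega'_\xi(T,0,x_0(0))A_{n-1}$, use that $\Omega'_\xi(\cdot,0,x_0(0))$ is a fundamental matrix of (\ref{ls}) so that $\Omega'_\xi(T,0,x_0(0))\dot x_0(0)=\dot x_0(0)$, and then cancel the invertible monodromy matrix to deduce $\dot x_0(0)=A_{n-1}v_0$, contradicting the nonsingularity of $(\dot x_0(0),A_{n-1})$. Your observation that the nondegeneracy hypothesis is not actually used here is also correct — the paper's own proof relies only on the $T$-periodicity of $\dot x_0$ and the invertibility of $Y(T)$, not on the multiplier $+1$ having algebraic multiplicity one.
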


\begin{proof} We argue by contradiction, thus we assume that there exists
$v_0\in\mathbb{R}^{n-1},$ $v_0\not=0,$ such that
$\dot{{x}}_0(0)=S'(0)v_0.$ We have
\begin{equation}\label{T1}
\dot{x}_0(0)=S'(0)v_0=\Omega'_\xi(T,0,{x}_0(0))A_{n-1}v_0,
\end{equation}
where $\Omega'_{\xi}$ is the derivative of $\Omega$ with respect
to the third variable.

\noindent On the other hand, (see
\cite{kraop}, Theorem 2.1) $\Omega'_\xi(\cdot,0,x_0(0))$ is a
fundamental matrix to (\ref{ls}) and since $\dot x_0$ is a
solution to (\ref{ls}), we have that
\begin{equation}\label{T2}
\Omega'_\xi(T,0,x_0(0))\dot x_0(0)=\dot x_0(T)=\dot x_0(0).
\end{equation}
From (\ref{T1}) and (\ref{T2}) we conclude that $\dot
x_0(0)=A_{n-1}v_0,$ which means that the matrix $(\dot
x_0(0),A_{n-1})$ is singular contradicting the definition of
$A_{n-1}.$
\end{proof}

\vskip0.3truecm As a consequence of the previous lemma we have the
following result.

\begin{corollary}\label{conv_cor1} Assume the conditions of
Lemma~\ref{conv_lem2}. Let $x_\varepsilon$ be a $T$-periodic
solution to perturbed system (\ref{ps}) satisfying (\ref{convp})
uniformly with respect to $t\in\mathbb{R},$ then there exists
$\varepsilon_0>0$ and $r_0>0$ such that for any
$\varepsilon\in(0,\varepsilon_0]$ the equation
$x_\varepsilon(\Delta)=S(v)$ has an unique solution
$(\Delta_\varepsilon,v_\varepsilon)$ in
$[-r_0,r_0]\times\{v\in\mathbb{R}^{n-1}:\|v\|\le r_0\}.$ Moreover,
the functions $\varepsilon\to\Delta_\varepsilon,$  $\varepsilon\to
v_\varepsilon$ are continuous at $\varepsilon=0$ with the property
$\Delta_0=0$ and $v_0=0.$
\end{corollary}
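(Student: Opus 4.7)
The natural strategy is to recognize the system $x_\varepsilon(\Delta) = S(v)$ as an implicit equation and apply the implicit function theorem to the map
\[
F(\Delta, v, \varepsilon) := x_\varepsilon(\Delta) - S(v),
\]
where for $\varepsilon = 0$ we interpret $x_0$ as the limit cycle itself, and verify the standard hypotheses at $(\Delta, v, \varepsilon) = (0, 0, 0)$.

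The base point is a solution: using $\Omega(T, 0, x_0(0)) = x_0(T) = x_0(0)$, one gets $F(0, 0, 0) = x_0(0) - S(0) = 0$. The crucial algebraic fact is that the partial Jacobian of $F$ in the $(\Delta, v)$-variables at this point is the $n \times n$ block matrix $(\dot x_0(0), -S'(0))$, whose first column comes from $\partial_\Delta x_0(\Delta)|_{\Delta = 0} = \dot x_0(0)$. By Lemma~\ref{conv_lem2}, $\dot x_0(0)$ does not lie in the range of $S'(0)$, so this square matrix is nonsingular.

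For regularity, hypothesis (\ref{reg0}) together with the uniform convergence (\ref{convp}) yields, via standard continuous dependence on parameters for ODEs, that both $x_\varepsilon(\Delta)$ and its $\Delta$-derivative $f(x_\varepsilon(\Delta)) + \varepsilon g(\Delta, x_\varepsilon(\Delta), \varepsilon)$ are jointly continuous in $(\Delta, \varepsilon)$ near $(0, 0)$. Since $f \in C^1$, the map $S$ defined in (\ref{suf}) is $C^1$ as well. Consequently $F$ is continuous and its partial Jacobian in $(\Delta, v)$ is continuous jointly in all three variables on a neighbourhood of $(0, 0, 0)$.

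With these three ingredients, the version of the implicit function theorem that requires only continuity in the parameter (proved by a uniform contraction argument on a fixed ball) produces $\varepsilon_0 > 0$, $r_0 > 0$, and a unique continuous map $\varepsilon \mapsto (\Delta_\varepsilon, v_\varepsilon)$ solving $F = 0$ inside $[-r_0, r_0] \times \{v\in\mathbb{R}^{n-1} : \|v\| \le r_0\}$, with $(\Delta_0, v_0) = (0, 0)$ as required. The only delicate point I anticipate is extracting a neighbourhood uniform in $\varepsilon$, as opposed to one obtained by applying a smooth IFT at each fixed $\varepsilon$ separately; this uniformity is built into the contraction-mapping proof provided that the perturbation of the Jacobian stays uniformly small, which is precisely what the joint continuity established above guarantees.
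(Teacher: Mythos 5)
Your proof is correct and follows the same route as the paper's: both define $F((\Delta,v),\varepsilon)=x_\varepsilon(\Delta)-S(v)$, check $F((0,0),0)=0$, note that $F'_{(\Delta,v)}((0,0),0)=(\dot x_0(0),-S'(0))$ is invertible by Lemma~\ref{conv_lem2}, and invoke the parameter-continuous (contraction-mapping) version of the implicit function theorem, exactly as the paper does via the reference to Kolmogorov--Fomin. One small point of precision: you attribute the joint continuity of $(\Delta,\varepsilon)\mapsto x_\varepsilon(\Delta)$ and of $\partial_\Delta x_\varepsilon(\Delta)$ to ``standard continuous dependence on parameters for ODEs,'' but the $x_\varepsilon$ are not produced by solving an initial value problem with continuously varying data --- they are an arbitrary given family of periodic solutions. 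The correct source of the required continuity (at $\varepsilon=0$, which is all the argument needs) is the hypothesis (\ref{convp}): the assumed uniform convergence $x_\varepsilon\to x_0$, combined with $f\in C^1$, $g\in C$ and the differential equation itself, gives $\dot x_\varepsilon=f(x_\varepsilon)+\varepsilon g(\cdot,x_\varepsilon,\varepsilon)\to f(x_0)=\dot x_0$ uniformly, and this is what makes the contraction constant uniform in $\varepsilon$. With that substitution your argument matches the paper's.
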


\vskip0.3cm

\begin{proof} Define the function
$F\in C(\mathbb{R}^n\times[0,1],\mathbb{R}^n)$ as
$F((t,v),\varepsilon)=x_\varepsilon(t)-S(v),$ then $F((0,0),0)=0.$
Moreover, $F$ is continuously differentiable with respect to the
first variable and $F'_{(t,v)}((0,0),0)=(\dot x_0(0),-S'(0))$ is
nonsingular by Lemma~\ref{conv_lem2}. The conclusion follows from a
generalized version, see (\cite{kolm}, Ch.~X, \S~2.1), of the
classical implicit function theorem which requires $F\in
C^1(\mathbb{R}^n\times[0,1],\mathbb{R}^n),$ while we do not have
here the differentiability of $F$ with respect to the second
variable.
\end{proof}

\vskip0.3cm \noindent Figure~1 illustrates the meaning of
Corollary~\ref{conv_cor1}. We are now in the position to prove
inequality (\ref{sqr1}).

\begin{figure}
\begin{center}
  \includegraphics[scale=.6]{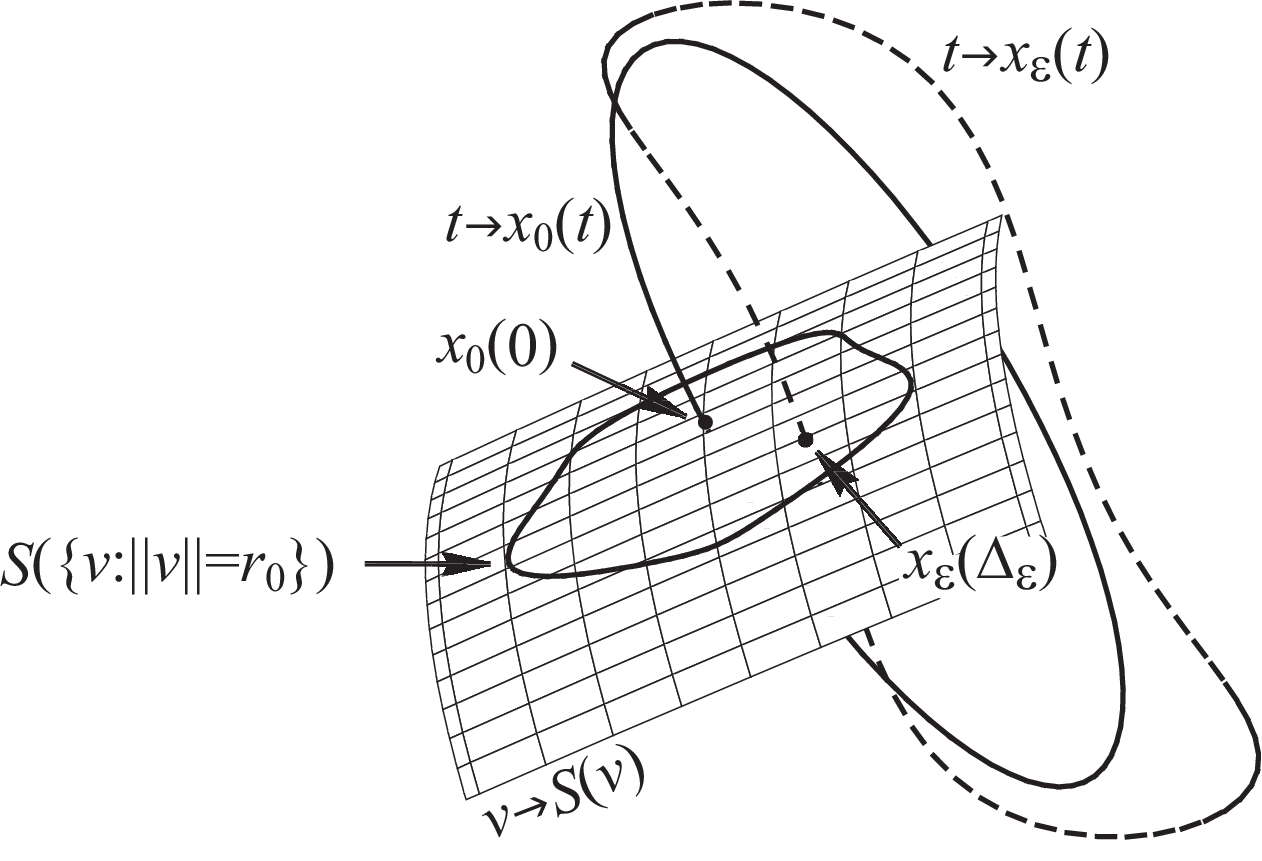}\\
  \caption{~}
  \end{center}
\end{figure}

\begin{theorem}\label{thmI} Assume $f\in C^1(\mathbb{R}^n,\mathbb{R}^n),$ $g\in C(\mathbb{R}\times
\mathbb{R}^n\times[0,1],\mathbb{R}^n).$ Let $x_\varepsilon$ be a
$T$-periodic solution to perturbed system (\ref{ps}) satisfying
\begin{equation}\label{conv}
  \|x_\varepsilon(t)- {x}_0(t)\|\to 0\mbox{\quad as}\quad\varepsilon\to
  0
\end{equation}
uniformly with respect to $t\in[0,T],$ where ${x}_0$ is a
nondegenerate $T$-periodic limit cycle of unperturbed system
(\ref{np}). Let $\epsi_0>0$ and
$\{\Delta_\epsi\}_{\epsi\in(0,\epsi_0]}\subset \mathbb{R}$ be as
in Corollary~\ref{conv_cor1}.
 Then there
exists $M>0$ such that
\begin{equation}\label{IN}
\|x_\varepsilon(t+\Delta_\varepsilon)- {x}_0(t)\|\le
M\epsi\quad\mbox{ for\ any\ }t\in[0,T] \mbox{ and\ any\ }\
\epsi\in(0,\epsi_0].
\end{equation}
\end{theorem}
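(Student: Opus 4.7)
The plan is to first establish $\|x_\epsi(\Delta_\epsi)-x_0(0)\|=O(\epsi)$ and then propagate this bound to all $t\in[0,T]$ by a Gronwall argument. Concretely, set $y_\epsi(t):=x_\epsi(t+\Delta_\epsi)-x_0(t)$; this satisfies
\[
\dot y_\epsi(t)=f(x_0(t)+y_\epsi(t))-f(x_0(t))+\epsi\, g(t+\Delta_\epsi,x_\epsi(t+\Delta_\epsi),\epsi).
\]
By (\ref{conv}) and Corollary~\ref{conv_cor1} the relevant trajectories stay in a compact set on which $f$ is Lipschitz (constant $L$) and $g$ is bounded (by some $M_g$), so $\|\dot y_\epsi(t)\|\le L\|y_\epsi(t)\|+\epsi M_g$. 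Gronwall then yields $\|y_\epsi(t)\|\le(\|y_\epsi(0)\|+\epsi M_gT)e^{LT}$, reducing (\ref{IN}) to showing $\|y_\epsi(0)\|=O(\epsi)$.

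To estimate $y_\epsi(0)$ I would compare the perturbed trajectory with the autonomous flow starting at the same point. Put
\[
z_\epsi(t):=x_\epsi(t+\Delta_\epsi)-\Omega(t,0,x_\epsi(\Delta_\epsi)),\qquad z_\epsi(0)=0.
\]
The same Lipschitz-plus-Gronwall argument applied to $z_\epsi$ gives $\|z_\epsi(t)\|\le C_1\epsi$ on $[0,T]$. Using the $T$-periodicity of $x_\epsi$, the relation $x_\epsi(\Delta_\epsi)=S(v_\epsi)=\Omega(T,0,h(v_\epsi))$ from Corollary~\ref{conv_cor1}, and the semigroup property $\Omega(2T,0,\cdot)=\Omega(T,0,\Omega(T,0,\cdot))$ of the autonomous flow, we rewrite
\[
z_\epsi(T)=x_\epsi(\Delta_\epsi)-\Omega(T,0,x_\epsi(\Delta_\epsi))=\Omega(T,0,h(v_\epsi))-\Omega(2T,0,h(v_\epsi)).
\]

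Writing the time-$T$ flow map as $P(\xi):=\Omega(T,0,\xi)$, we have $P(x_0(0))=x_0(0)$ and, choosing $Y$ with $Y(0)=I$, $P'(x_0(0))=Y(T)$. Taylor-expanding $P$ and $P\circ P$ around $x_0(0)$ with the increment $h(v_\epsi)-x_0(0)=A_{n-1}v_\epsi$ gives
\[
z_\epsi(T)=Y(T)(I-Y(T))A_{n-1}v_\epsi+o(\|v_\epsi\|)\qquad\text{as }\epsi\to 0.
\]
The linear map $v\mapsto Y(T)(I-Y(T))A_{n-1}v$ is injective on $\mathbb{R}^{n-1}$: if it vanishes at some $v$, then $A_{n-1}v\in\ker(I-Y(T))$ (since $Y(T)$ is invertible); the nondegeneracy of $x_0$ forces $\ker(I-Y(T))=\mathrm{span}(\dot x_0(0))$, so $A_{n-1}v=c\,\dot x_0(0)$, which by the nonsingularity of $(\dot x_0(0),A_{n-1})$ implies $c=0$ and $v=0$. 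Hence there is $c_0>0$ with $\|Y(T)(I-Y(T))A_{n-1}v\|\ge c_0\|v\|$; combining with $\|z_\epsi(T)\|\le C_1\epsi$ and absorbing the $o(\|v_\epsi\|)$ term for $\epsi$ small yields $\|v_\epsi\|\le C_2\epsi$, and then $\|y_\epsi(0)\|=\|S(v_\epsi)-S(0)\|=O(\|v_\epsi\|)=O(\epsi)$.

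The main obstacle is precisely this injectivity step: it is where the algebraic-multiplicity-one hypothesis on the characteristic multiplier $+1$ is used essentially (to identify $\ker(I-Y(T))$ with $\mathrm{span}(\dot x_0(0))$, so that the transversality condition built into $A_{n-1}$ is usable). Everything else is two applications of Gronwall's inequality and a $C^1$ Taylor expansion of the autonomous time-$T$ map, so no $C^2$ regularity of $f$ and no differentiability of $g$ is needed.
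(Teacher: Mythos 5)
Your proof is correct, and it reaches the crucial estimate $\|v_\epsi\|=O(\epsi)$ by a genuinely different route than the paper. The paper performs the change of variables $\nu_\epsi(\tau)=\Omega(0,\tau,x_\epsi(\tau+\Delta_\epsi))$ (a moving frame that straightens the unperturbed flow), derives an integral equation for $\nu_\epsi$, uses the exact identity $\nu_\epsi(T)-x_0(0)=A_{n-1}v_\epsi$, and proves $\|v_\epsi\|\le M_1\epsi$ by a contradiction/compactness argument on the normalized sequence $q_k=v_{\epsi_k}/\|v_{\epsi_k}\|$. You instead compare $x_\epsi(\cdot+\Delta_\epsi)$ directly with the autonomous flow issued from $x_\epsi(\Delta_\epsi)$, obtain $\|z_\epsi(T)\|=O(\epsi)$ by a second Gronwall application, rewrite $z_\epsi(T)$ as $P(h(v_\epsi))-P(P(h(v_\epsi)))$ using the Poincar\'e map $P=\Omega(T,0,\cdot)$ and the semigroup property, and then Taylor-expand to reduce the whole thing to injectivity of $v\mapsto Y(T)(I-Y(T))A_{n-1}v$, which follows from the algebraic-multiplicity-one hypothesis exactly as in the paper's Lemma~\ref{conv_lem2}. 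Your direct linear-algebra lower bound $\|Y(T)(I-Y(T))A_{n-1}v\|\ge c_0\|v\|$ replaces the paper's sequence argument; the two are logically interchangeable, but your packaging makes the role of the Poincar\'e map and of the kernel condition $\ker(I-Y(T))=\mathrm{span}(\dot x_0(0))$ more transparent, and it avoids the slightly awkward split in the paper's estimate (\ref{DEC}) into $\epsi\|\mu_\epsi(T)\|$ and $\|\nu_\epsi(T)-\nu_\epsi(0)\|$. Both proofs use only $C^1$ regularity of $f$, continuity of $g$, and Gronwall; neither requires any smoothness of $g$.

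One small point worth making explicit when writing this up: the remainder in your Taylor expansion is uniform because $\Omega'_\xi(T,0,\cdot)$ is continuous (hence uniformly continuous on compacta), so the $o(\|v_\epsi\|)$ term is indeed eventually dominated by $\tfrac{c_0}{2}\|v_\epsi\|$ once $v_\epsi\to 0$, which Corollary~\ref{conv_cor1} guarantees; and the final step $\|y_\epsi(0)\|=\|S(v_\epsi)-S(0)\|=O(\|v_\epsi\|)$ uses that $S$ is $C^1$ near $0$. Both of these you have implicitly, but stating them closes the argument cleanly.
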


\begin{proof} In the sequel $\epsi\in(0,\epsi_0]$ and
$\tau\in[0,T].$ Consider the change of variables
$\nu_\varepsilon(\tau)=\Omega(0,\tau,x_\varepsilon(\tau+\Delta_\varepsilon))$
in system (\ref{ps}). Observe that
\begin{equation}\label{PP}
x_\varepsilon(\tau+\Delta_\varepsilon)=\Omega(\tau,0,\nu_\varepsilon(\tau)).
\end{equation}
Taking the derivative in (\ref{PP}) with respect to $\tau$ we
obtain
\begin{equation}\label{ob1}
 \dot x_\varepsilon(\tau+\Delta_\varepsilon)=
 f(\Omega(\tau,0,\nu_\varepsilon(\tau))+\Omega'_{\xi}(\tau,0,
 \nu_\varepsilon(\tau)) \dot \nu_\varepsilon(\tau).
\end{equation}
On the other hand from (\ref{ps}) we have
\begin{equation}\label{ob2}
  \dot
  x_\varepsilon(\tau+\Delta_\varepsilon)=
  f(\Omega(\tau,0,\nu_\varepsilon(\tau)))+\varepsilon
  g(\tau+\Delta_\varepsilon,
  \Omega(\tau,0,\nu_\varepsilon(\tau)),\varepsilon).
\end{equation}
From (\ref{ob1}) and (\ref{ob2}) it follows
$$
  \dot \nu_\varepsilon(\tau)=\varepsilon\left(\Omega'_{\xi}
  (\tau,0,\nu_\varepsilon(\tau))\right)^{-1}g(\tau+\Delta_\varepsilon
  ,\Omega(\tau,0,\nu_\varepsilon(\tau)),\varepsilon),
$$
and since
$$
  \nu_\varepsilon(0)=x_\varepsilon(\Delta_\varepsilon)=x_\varepsilon(T+\Delta_\varepsilon)=
  \Omega(T,0,\nu_\varepsilon(T))
$$
we finally obtain
\begin{equation}\label{ob3}
  \nu_\varepsilon(\tau)=\Omega(T,0,\nu_\varepsilon(T))+\varepsilon\int\limits_0^\tau
  \left(\Omega'_{\xi}(s,0,\nu_\varepsilon(s))\right)^{-1}g(s+
  \Delta_\epsi,\Omega(s,0,\nu_\varepsilon(s)),\varepsilon)
  ds.
\end{equation}
Since $\nu_\varepsilon(\tau)\to {x}_0(0)$, for any $\tau\ge 0,$ as
$\varepsilon\to 0$ we can write $\nu_\varepsilon(\tau)$ in the
following form
\begin{equation}\label{rep}
  \nu_\varepsilon(\tau)={x}_0(0)+\varepsilon
  \mu_\varepsilon(\tau).
\end{equation}
We now prove that the functions $\mu_\varepsilon$ are bounded on
$[0,T]$ uniformly with respect to $\varepsilon\in(0,\varepsilon_0].$
For this, we first subtract ${x}_0(0)$ from both sides of
(\ref{ob3}), with $\tau=T,$ obtaining
\begin{eqnarray}
\varepsilon\mu_\varepsilon(T)&=&\varepsilon\,\Omega'_{\xi}(T,0,{x}_0(0))
\mu_\varepsilon(T)+o(\varepsilon\mu_\varepsilon(T))\nonumber\\
& &
+\varepsilon\,\int_0^T\left(\Omega'_{\xi}(s,0,\nu_\varepsilon(s))
\right)^{-1}g(s+\Delta_\varepsilon,
\Omega(s,0,\nu_\varepsilon(s)),\varepsilon)
  ds,\label{ob4}
\end{eqnarray}
where, from (\ref{rep}),
$\dfrac{o(\epsi\mu_\epsi(T))}{\|\epsi\mu_\epsi(T)\|}\to 0$ as
$\epsi\to 0.$ Since $x_\varepsilon(\Delta_\varepsilon)\in
S\left(\{v\in\mathbb{R}^{n-1}:\|v\|\le r_0\right)$ then by
Corollary~\ref{conv_cor1} there exists $v_\varepsilon\in
\mathbb{R}^{n-1},$ $\|v_\epsi\|\le r_0,$ such that
\begin{equation}\label{SP}
x_\varepsilon(\Delta_\varepsilon)=
\Omega(T,0,x_0(0)+A_{n-1}v_\epsi)
\end{equation}
and
\begin{equation}\label{convr}
v_\varepsilon\to 0\quad{\rm as}\quad \varepsilon\to 0.
\end{equation}
Now by using (\ref{SP}) we can represent
$\varepsilon\mu_\varepsilon(T)$ as follows
\begin{equation}\label{key}
\begin{array}{lll}
\varepsilon\mu_\varepsilon(T) & = &
\nu_\varepsilon(T)-{x}_0(0)=\Omega(0,T,x_\varepsilon
(\Delta_\varepsilon))-{x}_0(0)\\
&= &
\Omega(0,T,\Omega(T,0,x_0(0)+A_{n-1}v_\epsi))-{x}_0(0)=A_{n-1}v_\varepsilon.
\end{array}
\end{equation}
Therefore  (\ref{ob4}) can be rewritten as follows
\begin{eqnarray}
A_{n-1}v_\varepsilon&=&\Omega'_{\xi}(T,0,{x}_0(0))A_{n-1}v_\varepsilon+
o(A_{n-1}v_\varepsilon)\nonumber\\
& &+\,
\varepsilon\int_0^T\left(\Omega'_{\xi}(s,0,\nu_\varepsilon(s))
\right)^{-1}g(s+\Delta_\varepsilon,
\Omega(s,0,\nu_\varepsilon(s)),\varepsilon)
  ds.\label{bisp}
\end{eqnarray}
Let us show that there exists $M_1>0$ such that
\begin{equation}\label{estt}
  \|v_\varepsilon\|\le \varepsilon
  M_1,\qquad\mbox{for any}\quad \varepsilon\in(0,\varepsilon_0].
\end{equation}
Arguing by contradiction we assume that there exist sequences
$\{\varepsilon_k\}_{k\in\mathbb{N}}\subset(0,\epsi_0],$
$\varepsilon_k\to 0$ as $k\to\infty,$ such that
$\|v_{\varepsilon_k}\|=\varepsilon_k c_k,$ where $c_k\to\infty$ as
$k\to\infty.$ Let
$q_k=\dfrac{v_{\varepsilon_k}}{\|v_{\varepsilon_k}\|},$ then from
(\ref{bisp}) we have
\begin{eqnarray}\label{ff}
 & & A_{n-1}q_k=\Omega'_{\xi}(T,0,{x}_0(0))A_{n-1} q_k+\frac{o(A_{n-1}v_{\varepsilon_k})}
 {\|v_{\varepsilon_k}\|}\nonumber\\
 &  &+\frac{1}{c_k}\,\int_0^{T}
 \left(\Omega'_{\xi}(s,0,\nu_{\varepsilon_k}(s))
 \right)^{-1}g(s+\Delta_{\varepsilon_k},\Omega(s,0,\nu_{\varepsilon_k}(s)),\varepsilon_k)
  ds,
\end{eqnarray}
where $\dfrac{o(A_{n-1}v_{\varepsilon_k})}{\|v_{\varepsilon_k}\|}\to
0$ as $k\to \infty,$ in fact
$\dfrac{o(A_{n-1}v_{\varepsilon_k})}{\|v_{\varepsilon_k}\|}=\dfrac{o(A_{n-1}v_{\varepsilon_k})}{\|A_{n-1}v_{\varepsilon_k}\|}\cdot
\dfrac{\|A_{n-1}v_{\varepsilon_k}\|}{v_{\varepsilon_k}}.$
 Without
loss of generality we may assume that the sequence
$\{q_k\}_{k\in\mathbb{N}}$ converges, let
$q_0=\lim_{k\to\infty}q_k$ with $\|q_0\|=1$. By passing to the
limit as $k\to\infty$ in (\ref{ff}) we have that
$$
A_{n-1}q_0=\Omega'_{\xi}(T,0,{x}_0(0))A_{n-1} q_0.
$$
Therefore $A_{n-1}q_0$ is the initial condition of a $T$-periodic
solutions to (\ref{ls}). On the other hand the cycle $x_0$ is
nondegenerate, hence $A_{n-1}q_0$ is linearly dependent with $\dot
x_0$ contradicting the choice of $A_{n-1}.$ Thus (\ref{estt}) is
true for some $M_1>0.$ From (\ref{rep}) and the fact that
$\nu_\varepsilon(0)=x_\varepsilon(\Delta_\varepsilon)$ we have
\begin{equation}\label{DEC}
\begin{array}{lll}
 \|x_\epsi(\Delta_\epsi)-{x}_0(0)\|&=&\varepsilon\|\mu_\varepsilon(0)\|\le
 \epsi\|\mu_\epsi(T)\|+\|\epsi\mu_\epsi(T)-\epsi\mu_\epsi(0)\|\\
 &=&\epsi\|\mu_\epsi(T)\|+\|\nu_\epsi(T)-\nu_\epsi(0)\|.
\end{array}
\end{equation}
From (\ref{ob3}) we have that there exists $M_2>0$ such that
\begin{equation}\label{estt1}
  \|\nu_\epsi(T)-\nu_\epsi(0)\|\le \epsi M_2,\qquad{\rm for\ any\
  }\epsi\in(0,\epsi_0].
\end{equation}
Therefore combining (\ref{key}) with (\ref{estt})  and taking into
account (\ref{estt1}) we have from (\ref{DEC}) that
$$
  \|x_\epsi(\Delta_\epsi)-{x}_0(0)\|\le \epsi\|A_{n-1}\|
  M_1  +\epsi M_2,\qquad{\rm for\ any\
  }\epsi\in(0,\epsi_0].
$$
Since
$$
\begin{array}{ll}
x_\epsi(t+\Delta_\epsi)-x_0(t)=&x_\epsi(\Delta_\epsi)-x_0(0)+\int\limits_0^t(f(x_\epsi(s+\Delta_\epsi))-
f(x_0(s)))ds\\
&+\epsi\int\limits_0^t
g(s+\Delta_\epsi,x_\epsi(s+\Delta_\epsi),\epsi)ds
\end{array}
$$
and $f\in C^1(\mathbb{R}^n,\mathbb{R}^n)$ then there exist a
constant $M_3\ge 0$ such that
\begin{equation}\label{GR}
\begin{array}{lll}
\|x_\epsi(t+\Delta_\epsi)-x_0(t)\|&\le&  \left(\epsi\|A_{n-1}\|M_1+
 \epsi M_2\right)\\
 & &+M_3\int\limits_0^t\|x_\epsi(s+\Delta_\epsi)-
x_0(s)\|ds+\epsi M_3,
\end{array}
\end{equation}
for any $\epsi\in(0,\epsi_0].$ By means of the Gronwall-Bellman
lemma (see e.g. \cite{dem}, Ch.~II, \S~11) inequality (\ref{GR})
implies
\begin{equation*}
\|x_\epsi(t+\Delta_\epsi)-x_0(t)\|\le  \epsi\left(\|A_{n-1}\|
  M_1+ M_2+M_3\right){\rm e}^{M_3 T}\quad{\rm for\ any\
  }\epsi\in(0,\epsi_0].
\end{equation*}
and thus the proof is complete.
\end{proof}

\begin{remark}\label{new} Assume that the $T$-periodic solution
$x_\epsi$ of system (\ref{ps}) satisfies the property
$\|x_\epsi(t)-\widetilde{x}(t)\|\to 0,$ where  $\widetilde{x}$ is a
$T$-periodic nondegenerate limit cycle of (\ref{np}). Let
$\tau\in[0,T],$ define $x_0^\tau(t):=\widetilde{x}(t+\tau),$ then we
have $\|x_\epsi(t+\tau)-x_0^\tau(t)\|\to 0$ as $\epsi\to 0.$ Denote
by $S^\tau$ the surface $S$ corresponding to $x_0^\tau$ given by
(\ref{suf}). Observe now that $\epsi_0>0$ of
Corollary~\ref{conv_cor1} can be chosen sufficiently small in such a
way that it does not depend on the choice of $\tau\in[0,T]$ used in
the definition of $x_0^\tau.$ Therefore for any $\tau\in[0,T]$ and
any $\epsi\in(0,\epsi_0]$ Corollary~\ref{conv_cor1} guarantees the
existence of a $\Delta_{\epsi}^\tau$ with the property
\begin{equation}\label{R1}
  S^\tau(\{v\in\mathbb{R}^{n-1}:\|v\|\le r_0\})\cap
  x_\epsi([0,T])=\{x_\epsi(\Delta_{\epsi}^\tau+\tau)\}.
\end{equation}
In conclusion, Theorem~\ref{thmI} can be proved by replacing in
(\ref{IN}) $\Delta_\epsi$ by $\Delta_{\epsi}^\tau,$ namely one has
the following conclusion
\begin{equation}\label{R2}
  \|x_\epsi(t+\tau+\Delta_{\epsi}^\tau)-x_0^\tau(t)\|\le
  M\epsi\quad{\rm for\ any\ }t,\tau\in[0,T]{\rm\ and\ any\ }\epsi\in
  (0,\epsi_0].
\end{equation}
In particular, if for any $\epsi>0$ sufficiently small there exists
$\tau_\epsi\in[0,T]$ such that $x_\epsi(\tau_\epsi)$ belongs to
$S^{\tau_\epsi}$ then we can take $\Delta_{\epsi}^{\tau_\epsi}=0$ in
(\ref{R1}) and (\ref{R2}) becomes
$$
\|x_\varepsilon(t)- \widetilde{x}(t)\|\le M\epsi\quad\mbox{for
any}\ t\in[0,T]\ \mbox{and}\ \epsi>0\ \mbox{sufficiently  small}.
$$
\end{remark}

\begin{remark} It can be checked, see e.g. (\cite{can},
formula~37), that the cycle $ {x}_0 $ of the example in the
introduction is nondegenerate. Thus Theorem~\ref{thmI} applies to
the example to ensure the existence of $\Delta_\epsilon = (\sqrt
\varepsilon)$ for which the left hand side of (\ref{sqr}) is
uniformly bounded with respect to $t\in [0, 2\pi]$ and
$\varepsilon>0$ sufficiently small.
\end{remark}

\section{Applications}

\vskip0.3truecm \noindent Let $z$ be an eigenfunction of the adjoint
system of (\ref{ls})
\begin{equation}\label{ss}
  \dot z=-(f'({x}_0(t)))^*z,
\end{equation}
which is not $T$-periodic. Here and in the following $^*$ denotes
the transpose. We recall that an eigenfunction of a linear
$T$-periodic system is a Floquet solution of this system, namely it
is a solution $z$ satisfying $z(t+T)=\rho z(t)$ for some
$\rho\in\mathbb{R}$ and any $t\in\mathbb{R}.$ Consider the scalar
product
\begin{equation}\label{SPr}
\left<z(t),\lim_{\epsi\to 0}\frac{x_\epsi(t+\Delta_\epsi)-
  {x}_0(t)}{\epsi}\right>.
\end{equation}
In this Section we provide several results about the convergence of
$x_\varepsilon(t+\Delta_\varepsilon)$ to ${x}_0(t)$ in terms of
(\ref{SPr}). The main tool is the following scalar function
\begin{equation}\label{tool}
  M^\bot_z(t)= \frac{\rho}{\rho-1}\int\limits_{t-T}^t
  \left<z(s),g(s,{x}_0(s),0)\right>ds,
\end{equation}
where $\rho$ is the characteristic multiplier of (\ref{ss})
corresponding to the eigenfunction $z.$ The relationship between
(\ref{SPr}) and $M^\bot_z$ is shown by the following result.

\begin{theorem}\label{conv_thm1} Assume $f\in C^1(\mathbb{R}^n,\mathbb{R}^n),$ $g\in C(\mathbb{R}\times
\mathbb{R}^n\times[0,1],\mathbb{R}^n).$ Let $x_\varepsilon$ be a
$T$-periodic solution to (\ref{ps}) such that
\begin{equation}\label{INp}
\|x_\varepsilon(t+\Delta_\varepsilon)- {x}_0(t)\|\le
M\epsi\quad\mbox{for\ any\ }t\in[0,T]\ \mbox{and any}\
\epsi\in(0,\epsi_0],
\end{equation}
where $\Delta_\epsi\to 0$ as $\epsi\to 0,$ $M, \epsi_0>0$ and $x_0$
is a nondegenerate limit cycle of (\ref{np}). Let $z$ be a not
$T$-periodic eigenfunction of (\ref{ss}). Then
\begin{equation}\label{mainprop}
  \lim_{\epsi\to
0}\dfrac{1}{\epsi}\left<z(t),x_\varepsilon(t+\Delta_\varepsilon)-
{x}_0(t)\right> =M^\bot_z(t)
\end{equation}
uniformly with respect to $t\in[0,T].$
\end{theorem}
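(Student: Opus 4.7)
The plan is to study the function $y_\epsi(t):=x_\epsi(t+\Delta_\epsi)-x_0(t)$. By the hypothesis (\ref{INp}), $\|y_\epsi(t)\|\le M\epsi$ uniformly on $[0,T]$, and both $x_\epsi(\cdot+\Delta_\epsi)$ and $x_0$ are $T$-periodic, so $y_\epsi$ is $T$-periodic as well. Subtracting the equations (\ref{ps}) and (\ref{np}) and using the Taylor integral remainder
$$f(x_\epsi(t+\Delta_\epsi))-f(x_0(t))=f'(x_0(t))y_\epsi(t)+R_\epsi(t),$$
with $R_\epsi(t):=\int_0^1[f'(x_0(t)+\theta y_\epsi(t))-f'(x_0(t))]y_\epsi(t)\,d\theta$, I obtain
$$\dot y_\epsi(t)=f'(x_0(t))y_\epsi(t)+\epsi\,g(t+\Delta_\epsi,x_\epsi(t+\Delta_\epsi),\epsi)+R_\epsi(t).$$
Since $f'$ is uniformly continuous on a compact neighbourhood of $x_0([0,T])$ and $\|y_\epsi\|_\infty=O(\epsi)$, the remainder satisfies $\|R_\epsi(t)\|=o(\epsi)$ uniformly in $t\in[0,T]$.

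Next I pair this identity with the adjoint eigenfunction $z$. Using $\dot z=-(f'(x_0(t)))^*z$, the $f'(x_0(t))y_\epsi$ contribution is annihilated, so
$$\frac{d}{dt}\langle z(t),y_\epsi(t)\rangle=\epsi\langle z(t),g(t+\Delta_\epsi,x_\epsi(t+\Delta_\epsi),\epsi)\rangle+\langle z(t),R_\epsi(t)\rangle.$$
Integrating from $t-T$ to $t$, and invoking the $T$-periodicity of $y_\epsi$ together with the Floquet relation $z(t-T)=\rho^{-1}z(t)$, the left-hand side telescopes to $(1-\rho^{-1})\langle z(t),y_\epsi(t)\rangle$. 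Since $z$ is not $T$-periodic, $\rho\ne 1$; multiplying through by $\rho/(\rho-1)$ and dividing by $\epsi$ yields
$$\frac{1}{\epsi}\langle z(t),y_\epsi(t)\rangle=\frac{\rho}{\rho-1}\int_{t-T}^t\langle z(s),g(s+\Delta_\epsi,x_\epsi(s+\Delta_\epsi),\epsi)\rangle\,ds+o(1),$$
where the $o(1)$ term is uniform in $t\in[0,T]$.

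To conclude, I pass to the limit inside the integral. The hypothesis (\ref{INp}) together with $\Delta_\epsi\to 0$ gives $x_\epsi(\cdot+\Delta_\epsi)\to x_0$ uniformly on $[0,T]$, and continuity of $g$ on a compact neighbourhood of $[0,T]\times x_0([0,T])\times\{0\}$ then forces the integrand to converge uniformly in both $s$ and $t$ to $\langle z(s),g(s,x_0(s),0)\rangle$. The limit of the right-hand side is therefore $M^\bot_z(t)$ by definition (\ref{tool}), and uniformity in $t$ is inherited from the uniform convergence of the integrand together with the continuity of the limit. The main delicate point is the $o(\epsi)$ bound on $R_\epsi$: because only $f\in C^1$ is assumed, a quadratic estimate is unavailable and one must rely on uniform continuity of $f'$ on a compact neighbourhood of the cycle combined with the a priori bound $\|y_\epsi\|_\infty=O(\epsi)$ supplied by Theorem~\ref{thmI}. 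This is exactly where the relaxed regularity of (\ref{reg0}) replaces the classical $C^2$ hypothesis (\ref{reg1}) used by Malkin and Loud.
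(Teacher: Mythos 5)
Your proof is correct and reaches the same scalar identity as the paper, but by a more direct route that bypasses most of the paper's machinery. The paper first fixes a fundamental matrix $Y(t)$ of (\ref{ls}) normalized so that $z(0)^{*}Y(0)=(0,\ldots,0,1)$, introduces the auxiliary function $a_\epsi(t)=Y(t)^{-1}\bigl(x_\epsi(t+\Delta_\epsi)-x_0(t)\bigr)/\epsi$, invokes Perron's lemma to get $z(t)^{*}Y(t)\equiv(0,\ldots,0,1)$, extracts the scalar ODE for the $n$-th component $a_{\epsi,n}$, and solves the resulting boundary-value problem with the Floquet condition $a_{\epsi,n}(t_0)=\rho\,a_{\epsi,n}(t_0-T)$. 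You short-circuit all of this by differentiating $\langle z(t),y_\epsi(t)\rangle$ directly: the adjoint equation $\dot z=-(f'(x_0))^{*}z$ annihilates the $f'(x_0)y_\epsi$ contribution on the spot, which is precisely the content of Perron's lemma, and integrating over $[t-T,t]$ using $T$-periodicity of $y_\epsi$ together with $z(t-T)=\rho^{-1}z(t)$ yields the same scalar relation, with the $\rho/(\rho-1)$ prefactor emerging in one step. Your $o(\epsi)$ estimate on the Taylor remainder $R_\epsi$ via uniform continuity of $f'$ on a compact tube around the cycle, combined with the a priori $\|y_\epsi\|_\infty=O(\epsi)$ from (\ref{INp}), is the correct replacement for the $C^{2}$ quadratic bound and corresponds to the paper's uniform $o_t(\epsi)$ term in (\ref{bis1}). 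The limit passage inside the integral is justified exactly as you say, using that $x_\epsi(\cdot+\Delta_\epsi)\to x_0$ uniformly and $g$ is uniformly continuous on the relevant compact set (one implicitly uses, as does the paper, that $g$ is defined and continuous for the shifted arguments near $[-T,T]$, which holds under the standing $T$-periodicity of $g$ in its first variable). In sum, your argument is shorter, avoids the $Y,Z,a_\epsi$ change of variables and the explicit appeal to Perron's lemma, and makes the role of the adjoint eigenfunction transparent; the paper's presentation, by working through the full fundamental-matrix framing, makes the connection to the classical Floquet setup of Malkin and Loud more visible.
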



\begin{proof} In the sequel
$\epsi\in(0,\epsi_0],$ and $t,\tau\in[0,T].$ Let $A$ be a
nonsingular $n\times n$ matrix such that
\begin{equation}\label{Z0}
  z(0)^*\,A=(0,...,0,1).
\end{equation}
 Let $Y(t)$ be the fundamental matrix of the linearized system
(\ref{ls}) with initial condition $Y(0)=A.$ Since $A$ is nonsingular
then the columns of $Y(t)$ are linearly independent. Let
\begin{equation}\label{FR}
   Z(t)=\left(Y(t)^*\right)^{-1}
\end{equation}
and $a_\epsi\in C([0,T],\mathbb{R}^n)$ is given by
$$
  a_{\epsi}(t)=Z(t)^*\,\frac{x_\epsi(t+\Delta_\epsi)-{x}_0(t)}{\epsi}.
$$
Then we have
\begin{equation}\label{fw}
x_\epsi(t+\Delta_\epsi)-{x}_0(t)=\epsi Y(t)a_\epsi(t),
\end{equation}

In what follows by $o(\varepsilon),$ $\varepsilon>0,$ we will
denote a function, which may depend also on other variables,
having the property that $\dfrac{o(\varepsilon)}{\varepsilon}\to
0$ as $\varepsilon\to 0$ uniformly with respect to these variables
when they belong to any bounded set.

 By subtracting (\ref{np}) where $x(t)$ is replaced by
${x}_0(t)$ from (\ref{ps}) where $x(t)$ is replaced by
$x_\varepsilon(t+\Delta_\varepsilon)$ we obtain
\begin{eqnarray}\label{bis1}
  \dot x_\varepsilon(t+\Delta_\varepsilon)-\dot{
  {x}}_0(t)&=&f'({x}_0(t))(x_\varepsilon(t+\Delta_\varepsilon)-{x}_0(t))\nonumber\\
   & & +\varepsilon g(t+\Delta_\varepsilon,x_\varepsilon(t+\Delta_\varepsilon),\varepsilon)+o_t(\varepsilon),
\end{eqnarray}
here $\varepsilon\to  o_t(\varepsilon)$ is such that $
o_{t+T}(\cdot)=o_t(\cdot)$ for any $t\in\mathbb{R}.$ By
substituting (\ref{fw}) into (\ref{bis1}) we have
$$
  \epsi \dot Y(t)a_\epsi(t)+\epsi Y(t)\dot a_\epsi(t)
$$
$$
 =\epsi
 f'({x}_0(t))Y(t)a_\epsi(t)+\epsi g(t+\Delta_\varepsilon,x_\varepsilon(t+\Delta_\varepsilon),\varepsilon)+
  o_t(\varepsilon).
$$
Since $f'({x}_0(t))Y(t)=\dot Y(t)$ the last relation can be
rewritten as
\begin{equation}\label{BS}
  \epsi Y(t)\dot a_\epsi(t)
 =\epsi g(t+\Delta_\varepsilon,x_\varepsilon(t+\Delta_\varepsilon),\varepsilon)+
  o_t(\varepsilon).
\end{equation}
By means of Perron's lemma \cite{perron} (see also Demidovich
(\cite{dem}, Sec. III, \S 12) formula (\ref{Z0}) implies that
\begin{equation}\label{IM}
  z(t)^*\, Y(t)=(0,...,0,1)\quad{\rm for\ any\ }t\in\mathbb{R}.
\end{equation}
Therefore, applying $z(t)^*$ to both sides of (\ref{BS}) we have
$$
  \epsi(a_{\epsi,n})'(t)=\epsi z(t)^*\, g(t+\Delta_\varepsilon,x_\varepsilon(t+\Delta_\varepsilon),\varepsilon)
  +z(t)^*\, o_t(\varepsilon),
$$
where $a_{\epsi,n}(t)$ is the $n$-th component of the vector
$a_\epsi(t),$ and so
\begin{equation}\label{prob}
\begin{array}{rcl}
  a_{\varepsilon,n}(t)&=&a_{\varepsilon,n}(t_0)+\int\limits_{t_0}^t
  \left<z(\tau),g(\tau+\Delta_\varepsilon,
  x_\varepsilon(\tau+\Delta_\varepsilon),\varepsilon)\right>d\tau\\
  & & +\int\limits_{t_0}^t\left<z(\tau),\dfrac{o_\tau(\epsi)}{\epsi}\right>d\tau.
\end{array}
\end{equation}
From (\ref{FR}) we have that $Z(0)^*\,Y(0)=I.$ Therefore
$$
  \left(\left[Z(0)\right]_n\right)^* A=(0,...,0,1),
$$
where $[Z(0)]_n$ is the $n$-th column of $Z(0).$ Thus
$[Z(0)]_n=z(0)$ and so $a_{\epsi,n}(t)$ satisfies
\begin{equation}\label{bc}
  a_{\varepsilon,n}(t_0)=\rho a_{\varepsilon,n}(t_0-T)\quad{\rm
  for\ any\ }t_0\in[0,T].
\end{equation}
Solving (\ref{prob})-(\ref{bc}) with respect to
$a_{\varepsilon,n}(t_0)$ we obtain
$$
\begin{array}{rcl}
  a_{\varepsilon,n}(t_0)
&=&\dfrac{\rho}{\rho-1}\int\limits_{t_0-T}^{t_0}
\left<z(\tau),g(\tau+\Delta_\epsi,
  x_\varepsilon(\tau+\Delta_\epsi),\epsi)\right>d\tau\\
  & & +\dfrac{\rho}{\rho-1}\int\limits_{t_0-T}^{t_0}
  \left<z(\tau),\dfrac{o_\tau(\varepsilon)}{\epsi}\right>d\tau \quad{\rm for\ any\
  }t_0\in[0,T].
\end{array}
$$
On the other hand taking the scalar product of (\ref{fw}) with
$z(t)$ and using (\ref{IM}) we
 obtain
$$
\left<z(t),x_\varepsilon(t+\Delta_\varepsilon)-{x}_0(t)\right>=\varepsilon
a_{\varepsilon,n}(t)
$$
and thus
$$
 \begin{array}{rl}
    \dfrac{1}{\epsi}\left<z(t), x_\epsi(t+\right.&\hskip-
    0.3cm\left.\Delta_\epsi)-x_0(t)\right>=\dfrac{\rho}{\rho-1}
    \int\limits_{t_0-T}^{t_0}\left<z(\tau),g(\tau,x_0(\tau),0),0)\right>d\tau\\
    &
    +\dfrac{\rho}{\rho-1}\int\limits_{t_0-T}^{t_0}\left<z(\tau),
    g(\tau+\Delta_\epsi,x_\epsi(\tau+\Delta_\epsi),\epsi)-
    g(\tau,x_0(\tau),0)\right>d\tau\\
    &+\dfrac{\rho}{\rho-1}\int\limits_{t_0-T}^{t_0}\left<z(\tau),
    \dfrac{o_\tau(\epsi)}{\epsi}\right>d\tau.
 \end{array}
$$
The proof is complete.
\end{proof}

\begin{remark} \label{bif} It is of some interest to compare the
scalar function $M_z^\bot$ introduced in (\ref{tool}) with the
Malkin's bifurcation function (see \cite{mal}, formula~3.13) that
for system (\ref{ps}) takes the form
\begin{equation}\label{biff}
  M_{z_0}(t)= \int\limits_0^T
  \left<z_0(s),g(s-t,{x}_0(s),0)\right>ds,
\end{equation}
where $z_0$ is a $T$-periodic solution of system (\ref{ss}). This
bifurcation function was employed by Loud (see \cite{loud}
formula~3.48) to study for system (\ref{ps}) the convergence of
$x_\epsi$ to $x_0$.
\end{remark}

\begin{remark} \label{LOUD} Under the regularity assumptions
(\ref{reg1}), Malkin in \cite{mal} and Loud in \cite{loud} proved
that if
\begin{equation}\label{ML}
  M_{z_0}(0)=0{\rm \ and\ }(M_{z_0})'(0)\not=0,
\end{equation}
then (\ref{IN}) is valid with $\Delta_\epsi=0.$ Furthermore, letting
\begin{equation}\label{INVE}
  y_0(t)=\lim_{\epsi\to 0}\dfrac{x_\epsi(t)-x_0(t)}{\epsi}
\end{equation}
Malkin (\cite{mal}, formulas~4.3-4.4) showed that $y_0$ is a
$T$-periodic solution of
$$
  \dot y=f'(x_0(t))y+g(t,x_0(t),0)
$$
and Loud (\cite{loud}, formula~1.3, Lemma~1 and formula for $x$ at
p.~510) up to a change of coordinate represented $y_0$ in the form
\begin{equation}\label{lo}
\begin{array}{ll}
 y_0(t)=&\Phi_0(t)\int\limits_0^t\Phi_0^{-1}(s)g(s,x_0(s),0)ds+C\dot
 x_0(t)\\
 &+\,\Phi_0(t)\left\|\begin{array}{rr}
  0 & \alpha^{-1}\beta(B-I)^{-1}B\\
  0 &
  -(B-I)^{-1}B\end{array}\right\|\int\limits_0^T\Phi_0^{-1}(s)g(s,x_0(s),0)ds,
\end{array}
\end{equation}
where $\Phi_0$ is a suitable fundamental matrix of the linearized
system  (\ref{ls}), $\alpha\in\mathbb{R},$
$\beta^*\in\mathbb{R}^{n-1},$ and $B$ is a $n-1\times n-1$ matrix
defined by means of $\Phi_0(0)$ and $\Phi_0(T).$ Assuming that $z$
is not a $T$-periodic eigenfunction of (\ref{ss}) the question if
(\ref{INVE})-(\ref{lo}) imply (\ref{mainprop}) (with
$\Delta_\epsi=0$) was not addressed in the previous papers. This
means that our Theorem~\ref{conv_thm1} represents also a
contribution to the case when $f,g$ satisfy (\ref{reg1}) without
assuming (\ref{ML}).
\end{remark}

\vskip0.5truecm \noindent In the sequel by using (\ref{mainprop})
of Theorem~\ref{conv_thm1} and the properties of $M^\bot_z$
several results about the behavior  of
$\dfrac{x_\epsi(t+\Delta_\epsi)-x_0(t)}{\epsi}$ as $\epsi\to 0$
are given. By
$\cos\angle(a,b)=\dfrac{\left<a,b\right>}{\|a\|\cdot\|b\|}$ we
denote the cosine of the  angle between the vectors $a,\,
b\in\mathbb{R}^n.$

\vskip0.5truecm\noindent
\begin{corollary}\label{conv_cor3} Assume all the conditions of
Theorem~\ref{conv_thm1}. Then for any eigenfunction $z$ of
(\ref{ss}), any $t\in[0,T]$ and sufficiently small $\varepsilon>0$
we have
$$ \mbox {if}\quad M^\bot_z(t)>0\quad\mbox{then}\quad
\cos\angle\left(z(t), x_\varepsilon(t+\Delta_\varepsilon)-
  {x}_0(t)
  \right)>0,
$$
$$ \mbox {if}\quad M^\bot_z(t)<0\quad\mbox{then}\quad
\cos\angle\left(z(t), x_\varepsilon(t+\Delta_\varepsilon)-
  {x}_0(t)
  \right)<0.
$$
\end{corollary}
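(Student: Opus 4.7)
The plan is to obtain the corollary as a direct consequence of Theorem~\ref{conv_thm1}. That theorem provides the uniform-in-$t$ limit
$$
  \lim_{\epsi\to 0}\dfrac{1}{\epsi}\left<z(t),x_\varepsilon(t+\Delta_\varepsilon)-{x}_0(t)\right> = M^\bot_z(t),
$$
the quantitative link between the scalar function $M_z^\bot$ and the first-order behaviour of the inner product appearing in the angle. Note that $M_z^\bot$ is defined only when the characteristic multiplier $\rho$ associated with $z$ satisfies $\rho\ne 1$, i.e.\ only for non-$T$-periodic eigenfunctions, so the hypotheses of Theorem~\ref{conv_thm1} are automatically met in any situation where the statement of the corollary is nontrivial.

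Fix $t\in[0,T]$ and assume $M_z^\bot(t)>0$. I would then choose $\epsi_1\in(0,\epsi_0]$ small enough that for every $\epsi\in(0,\epsi_1]$ the quantity on the left-hand side above lies in, say, the interval $(M_z^\bot(t)/2,\,3M_z^\bot(t)/2)\subset(0,+\infty)$. Multiplication by $\epsi>0$ preserves positivity, giving $\left<z(t),x_\epsi(t+\Delta_\epsi)-x_0(t)\right>>0$ for all such $\epsi$.

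To conclude positivity of the cosine, I would verify that both vectors entering the formula $\cos\angle(a,b)=\left<a,b\right>/(\|a\|\|b\|)$ are nonzero. For $z$ this is immediate: as a nontrivial solution of the linear system (\ref{ss}), uniqueness of initial value problems forces $z\equiv 0$ the moment $z(t)=0$ at any single instant, contradicting $z$ being an eigenfunction. For the second vector, any vanishing would force the inner product with $z(t)$ to be zero, contradicting what has just been established. The cosine is therefore well-defined and, having the same sign as the inner product, is positive. The case $M_z^\bot(t)<0$ is handled by the same argument with reversed inequalities.

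The only mildly delicate point, and thus the \emph{main obstacle} such as it is, is to justify that the two vectors are nonzero so that $\cos\angle$ makes sense. Once this is dispatched, the corollary reduces to an immediate sign-reading from the limit supplied by Theorem~\ref{conv_thm1}, and no substantive difficulty is anticipated.
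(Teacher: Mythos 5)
Your proposal is correct and matches the paper's argument: both read off the sign of $\cos\angle\bigl(z(t),x_\epsi(t+\Delta_\epsi)-x_0(t)\bigr)$ directly from the limit in \eqref{mainprop}, the paper doing so by rewriting the scalar product as $\|z(t)\|\,\|x_\epsi(t+\Delta_\epsi)-x_0(t)\|\,\cos\angle(\cdot,\cdot)$ and substituting into \eqref{mainprop}. The only addition in your version is the explicit check that both vectors are nonzero so that the cosine is well-defined, which the paper leaves implicit; this is a harmless and correct refinement, not a different route.
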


\noindent To prove Corollary~\ref{conv_cor3} it is sufficient to
observe that
\begin{equation}\label{mainprop1}
  \lim_{\epsi\to 0}\frac{1}{\epsi}\left\|z(t)\right\|\, \left\|x_\varepsilon(t+\Delta_\varepsilon)-
  {x}_0(t)\right\|\, \cos\angle\left(z(t),
x_\varepsilon(t+\Delta_\varepsilon)-
  {x}_0(t)
  \right)
= M^\bot_z(t)
\end{equation}
obtained by substituting
$$
\begin{array}{l}
  \left<z(t),x_\varepsilon(t+\Delta_\varepsilon)-{x}_0(t)\right>\\
  \hskip1cm=\left\|z(t)\right\|\, \left\|x_\varepsilon(t+\Delta_\varepsilon)-
  {x}_0(t)\right\|\, \cos\angle\left(z(t),
x_\varepsilon(t+\Delta_\varepsilon)-
  {x}_0(t)
  \right)
\end{array}
$$
into formula (\ref{mainprop}).

\vskip0.3truecm The next result is a direct consequence of
(\ref{mainprop1}).

\begin{corollary}\label{conv_cor4} Assume all the conditions of
Theorem~\ref{conv_thm1}. If there exists a not $T$-periodic
eigenfunction $z$ to (\ref{ss}) such that  $M^\bot_z(t)\not=0$ for
any $t\in[0,T]$ then
$$
  c_1\varepsilon\le\left\|x_\varepsilon(t+\Delta_\varepsilon)-
  {x}_0(t)\right\|\le
  c_2\varepsilon\quad{\rm for\ any\ }t\in[0,T]
$$
for some $0<c_1\le c_2,$ and sufficiently small $\varepsilon>0.$
\end{corollary}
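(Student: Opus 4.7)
The upper bound is immediate: under the standing hypothesis (\ref{INp}) of Theorem~\ref{conv_thm1}, we already have $\|x_\varepsilon(t+\Delta_\varepsilon)-x_0(t)\|\le M\varepsilon$ for all $t\in[0,T]$ and all $\varepsilon\in(0,\varepsilon_0]$, so one can take $c_2:=M$. All the work is therefore in producing the lower bound $c_1\varepsilon$.

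My plan for the lower bound is to leverage Theorem~\ref{conv_thm1} together with the Cauchy--Schwarz inequality. The function $t\mapsto M_z^\bot(t)$ is continuous on $[0,T]$ (it is a moving-window integral of a continuous integrand by (\ref{tool})), and by hypothesis it never vanishes. Hence by compactness there exists $m>0$ such that $|M_z^\bot(t)|\ge m$ for all $t\in[0,T]$. Since the convergence in (\ref{mainprop}) of Theorem~\ref{conv_thm1} is uniform in $t\in[0,T]$, there exists $\varepsilon_1\in(0,\varepsilon_0]$ such that for every $\varepsilon\in(0,\varepsilon_1]$
\begin{equation*}
  \frac{1}{\varepsilon}\bigl|\langle z(t),\,x_\varepsilon(t+\Delta_\varepsilon)-x_0(t)\rangle\bigr|\ge \frac{m}{2}\qquad\text{for all }t\in[0,T].
\end{equation*}

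Now $z$ is a nontrivial Floquet solution of a linear system, so $\|z(t)\|$ is continuous and strictly positive on $[0,T]$; in particular $Z:=\max_{t\in[0,T]}\|z(t)\|$ is finite and positive. Applying the Cauchy--Schwarz inequality to the scalar product above gives
\begin{equation*}
  \|x_\varepsilon(t+\Delta_\varepsilon)-x_0(t)\|\ \ge\ \frac{|\langle z(t),x_\varepsilon(t+\Delta_\varepsilon)-x_0(t)\rangle|}{\|z(t)\|}\ \ge\ \frac{m}{2Z}\,\varepsilon,
\end{equation*}
so the constant $c_1:=m/(2Z)$ works (shrinking $\varepsilon_1$ further if necessary so that $c_1\le c_2=M$).

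There is no real obstacle here: the substantive content sits inside Theorem~\ref{conv_thm1}, and the corollary is essentially a continuity-plus-compactness consequence of (\ref{mainprop1}). The only point requiring a moment of care is justifying the continuity and nonvanishing of $\|z(t)\|$, which follow automatically from $z$ being a nontrivial Floquet solution of the linear $T$-periodic adjoint system.
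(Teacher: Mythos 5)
Your proof is correct and follows essentially the same route the paper has in mind: the paper dismisses Corollary~\ref{conv_cor4} as a ``direct consequence of (\ref{mainprop1}),'' and your argument via compactness of $[0,T]$, uniform convergence in (\ref{mainprop}), and Cauchy--Schwarz is precisely the content of that remark (with (\ref{mainprop1}) being just (\ref{mainprop}) rewritten through $\cos\angle$). The upper bound from (\ref{INp}) is also exactly what the paper intends.
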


Combining Theorem~\ref{thmI} and Theorem~\ref{conv_thm1}  we can
derive the following fact.
\begin{corollary}\label{conv_cor5} Assume all the conditions of
Theorem~\ref{thmI}. Assume that $T>0$ is the least period of
$x_0.$ If there exists a not $T$-periodic eigenfunction $z$ to
(\ref{ss}) such that $M^\bot_z(0)\not=0$ then
$$
x_\varepsilon(t)\not={x}_0(0)\quad \mbox{ for any }\; t\in[0,T]
$$
provided that $\varepsilon>0$ is sufficiently small.
\end{corollary}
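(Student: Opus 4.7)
The plan is to argue by contradiction. Suppose sequences $\epsi_k\to 0^+$ and $t_k\in[0,T]$ exist with $x_{\epsi_k}(t_k)=x_0(0)$. The first step is to show that, after possibly replacing $t_k$ by $t_k-T$ using $T$-periodicity of $x_{\epsi_k}$, one may assume $t_k\to 0$. For this I would pass to a subsequence so that $t_k\to t^*\in[0,T]$, and invoke the uniform convergence (\ref{conv}) granted by the hypotheses of Theorem~\ref{thmI}, together with the continuity of $x_0$, to obtain $x_0(t^*)=\lim_k x_{\epsi_k}(t_k)=x_0(0)$. Because $T$ is the least period of $x_0$, this forces $t^*\in\{0,T\}$, and the $T$-periodicity of $x_{\epsi_k}$ lets one reduce the case $t^*=T$ to $t^*=0$.

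The next step is to identify $t_k$ with the shift $\Delta_{\epsi_k}$ supplied by Corollary~\ref{conv_cor1}. The key observation is that $x_0(0)=\Omega(T,0,x_0(0))=S(0)$ directly from the definition (\ref{suf}) of $S$. Since $t_k\to 0$ and $\Delta_{\epsi_k}\to 0$, both lie in the window $[-r_0,r_0]$ for $k$ large, so the pairs $(t_k,0)$ and $(\Delta_{\epsi_k},v_{\epsi_k})$ are each solutions of $x_{\epsi_k}(\Delta)=S(v)$ in $[-r_0,r_0]\times\{\|v\|\le r_0\}$. The uniqueness part of Corollary~\ref{conv_cor1} then gives $t_k=\Delta_{\epsi_k}$ for all sufficiently large $k$.

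The contradiction now comes from Theorem~\ref{conv_thm1}. Since the family $\{\Delta_\epsi\}$ produced by Corollary~\ref{conv_cor1} satisfies hypothesis (\ref{INp}) by Theorem~\ref{thmI}, formula (\ref{mainprop}) evaluated at $t=0$ yields
$$
\lim_{\epsi\to 0}\frac{1}{\epsi}\bigl\langle z(0),\,x_\epsi(\Delta_\epsi)-x_0(0)\bigr\rangle=M^\bot_z(0).
$$
Along the subsequence $\epsi_k$, the identity $x_{\epsi_k}(\Delta_{\epsi_k})=x_{\epsi_k}(t_k)=x_0(0)$ makes the pre-limit expression identically zero, forcing $M^\bot_z(0)=0$ and contradicting the standing hypothesis. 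I expect the delicate point to be the uniqueness reduction $t_k=\Delta_{\epsi_k}$: it is here that the minimality of $T$ (to pin down $t^*\in\{0,T\}$) and the transversality encoded in Corollary~\ref{conv_cor1} (to rule out a second intersection of $x_{\epsi_k}$ with the surface $S$ near $x_0(0)$) both have to be used together.
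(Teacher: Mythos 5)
Your argument is correct and follows essentially the same route as the paper's proof: both establish that $t_k$ must eventually coincide with the shift $\Delta_{\epsi_k}$ from Corollary~\ref{conv_cor1} (the paper rules out $|t_k|>r_0$ via minimality of $T$ and then invokes uniqueness, while you first use minimality to force $t_k\to 0$ and then invoke uniqueness, together with the explicit observation that $x_0(0)=S(0)$), and both then conclude $M^\bot_z(0)=0$ by passing to the limit in (\ref{mainprop}). The differences are only in the ordering of the steps, not in the underlying ideas.
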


\begin{proof} Equivalently we prove that
$x_\varepsilon(t)\not=x_0(0)$ for any $t\in[-T/2,T/2].$ Arguing by
contradiction we assume that there exist sequences $\varepsilon_k\to
0$ and $[-T/2,T/2]\ni t_k\to t_0$ as $k\to\infty$ such that
\begin{equation}\label{L1}
   x_{\varepsilon_k}(t_k)=x_0(0)\quad{\rm for\ any\
   }k\in\mathbb{N}.
\end{equation}
We have that
\begin{equation}\label{PRO}
  t_k=\Delta_{\varepsilon_k}{\rm\ for\ }k\in\mathbb{N}{\rm\
  sufficiently\ large,}
\end{equation}
where $\Delta_{\varepsilon_k}$ are given by
Corollary~\ref{conv_cor1}. Indeed, if (\ref{PRO}) does not hold
then from Corollary~\ref{conv_cor1} we obtain that $0<r_0<|t_k|\le
T/2.$ Therefore, passing to the limit in (\ref{L1}) we have
$x_0(t_0)=x_0(0)$ with $0<|t_0|\le T/2.$ This contradicts the fact
that $T>0$ is the least period of $x_0$ and so (\ref{PRO}) holds
true. Hence, from (\ref{L1}) we have that
$x_{\varepsilon_k}(\Delta_{\varepsilon_k})=x_0(0)$ for any
$k\in\mathbb{N}$ sufficiently large and passing to the limit as
$k\to\infty$ in (\ref{mainprop}) with $\varepsilon=\varepsilon_k$
we obtain $M^\bot_z(0)=0$ contradicting our assumptions.
\end{proof}

Observe that Corollary~\ref{conv_cor5}, unlike the other
Corollaries of this Section, requires that the
$\Delta_\varepsilon$ in (\ref{INp}) are those given by
Corollary~\ref{conv_cor1}. This is the reason for assuming the
conditions of Theorem~\ref{thmI} in Corollary~\ref{conv_cor5}.

\vskip0.5truecm\noindent In order to establish sufficient conditions
to ensure that the convergence rate of $x_\varepsilon$ to ${x}_0$ as
$\varepsilon\to 0$ is of order greater than $\varepsilon^1$ we need
the following preliminary result.

\begin{lemma}\label{conv_lem3} Assume all the conditions of
Theorem~\ref{conv_thm1}. Assume that system (\ref{ss}) has $n$
linearly independent eigenfunctions. If $\varepsilon_0>0$ is
sufficiently small, then for every
$\varepsilon\in(0,\varepsilon_0]$ such that
$$x_\varepsilon(\Delta_\varepsilon)\not={x}_0(0)$$
 there exists a not $T$-periodic eigenfunction $z$ of (\ref{ss})
 satisfying
\begin{equation}\label{lemma3}
\left|\cos\angle\left(z(0),x_\varepsilon(\Delta_\varepsilon)-{x}_0(0)\right)
\right|\ge\alpha_*,
\end{equation}
where $\alpha_*>0$ does not depend on $\varepsilon.$
\end{lemma}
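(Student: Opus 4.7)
The plan is to argue by contradiction, combining the transversality of Lemma~\ref{conv_lem2} with an elementary orthogonality between the not $T$-periodic adjoint eigenfunctions and $\dot x_0(0)$. First I would organise the $n$ linearly independent eigenfunctions $z_1,\dots,z_n$ of (\ref{ss}) by their characteristic multipliers $\rho_1,\dots,\rho_n$. The monodromy of (\ref{ss}) equals the inverse transpose of the monodromy of (\ref{ls}), so the algebraic multiplicity of the multiplier $+1$ coincides for both systems; by the nondegeneracy of $x_0$ exactly one $\rho_i$ equals $1$. After relabelling, $\rho_1=1$ (so $z_1$ is $T$-periodic) while $\rho_i\neq 1$ for $i\ge 2$ (so $z_2,\dots,z_n$ are not $T$-periodic).

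The key identity I would establish is $\langle z_i(0),\dot x_0(0)\rangle=0$ for every $i\ge 2$. Differentiating $z_i(t)^*\dot x_0(t)$ and using that $z_i$ solves (\ref{ss}) while $\dot x_0$ solves (\ref{ls}) yields $0$, so this pairing is constant in $t$. Evaluating at $0$ and $T$, with $z_i(T)=\rho_i z_i(0)$, $\dot x_0(T)=\dot x_0(0)$, and $\rho_i\neq 1$, gives the claim. A Wronskian argument shows that $z_1(0),\dots,z_n(0)$ are linearly independent in $\mathbb{R}^n$, hence $\mathrm{span}\{z_2(0),\dots,z_n(0)\}$ is an $(n-1)$-dimensional subspace whose orthogonal complement is precisely $\mathrm{span}\{\dot x_0(0)\}$ (recall $\dot x_0(0)=f(x_0(0))\neq 0$ since $x_0$ is a nontrivial cycle).

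Suppose the lemma fails. Negating the statement, for each $k\in\mathbb{N}$ there exists $\varepsilon_k\in(0,1/k]$ with $w_{\varepsilon_k}:=x_{\varepsilon_k}(\Delta_{\varepsilon_k})-x_0(0)\neq 0$ and $|\cos\angle(z_i(0),w_{\varepsilon_k})|<1/k$ for every $i\in\{2,\dots,n\}$. By Corollary~\ref{conv_cor1}, $w_{\varepsilon_k}=S(v_{\varepsilon_k})-S(0)$ with $v_{\varepsilon_k}\to 0$; since $S'(0)=\Omega'_\xi(T,0,x_0(0))A_{n-1}$ is injective (the monodromy is invertible and $A_{n-1}$ has full column rank), $S$ is one-to-one in a neighbourhood of $0$ and hence $v_{\varepsilon_k}\neq 0$ for $k$ large. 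Normalising $\hat v_{\varepsilon_k}:=v_{\varepsilon_k}/\|v_{\varepsilon_k}\|$ and extracting a convergent subsequence with limit $\hat v_0$, $\|\hat v_0\|=1$, I get
\[
\frac{w_{\varepsilon_k}}{\|v_{\varepsilon_k}\|}=S'(0)\hat v_{\varepsilon_k}+o(1)\longrightarrow S'(0)\hat v_0\neq 0,
\]
so $w_{\varepsilon_k}/\|w_{\varepsilon_k}\|\to\hat u_0:=S'(0)\hat v_0/\|S'(0)\hat v_0\|\in S'(0)(\mathbb{R}^{n-1})$. Passing to the limit in $|\cos\angle(z_i(0),w_{\varepsilon_k})|<1/k$ gives $\langle z_i(0),\hat u_0\rangle=0$ for every $i\ge 2$, so by the orthogonal complement identification $\hat u_0$ is a nonzero scalar multiple of $\dot x_0(0)$. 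This places $\dot x_0(0)$ in $S'(0)(\mathbb{R}^{n-1})$, contradicting Lemma~\ref{conv_lem2}.

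The hard part, I expect, is the dimension bookkeeping in the second paragraph: verifying that the $n-1$ not $T$-periodic eigenfunctions yield exactly $n-1$ vectors $z_i(0)$ whose span has $\mathrm{span}\{\dot x_0(0)\}$ as its orthogonal complement. This relies on both the nondegeneracy hypothesis and the assumption that (\ref{ss}) admits $n$ linearly independent eigenfunctions. Once this is secured, the contradiction in the third paragraph is a routine compactness-and-normalisation argument based on the injectivity of $S'(0)$.
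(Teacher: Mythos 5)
Your proof is correct and follows essentially the same strategy as the paper's: argue by contradiction, normalise $v_{\varepsilon_k}$, pass to the limit to obtain a unit vector $q_0$ (your $\hat v_0$) with $S'(0)q_0\ne 0$ orthogonal to all $z_i(0)$ ($i\ge 2$), then use the orthogonality $\langle\dot x_0(0),z_i(0)\rangle=0$ to conclude $S'(0)q_0$ is parallel to $\dot x_0(0)$, contradicting Lemma~\ref{conv_lem2}. The only cosmetic difference is that the paper packages the orthogonal-complement identification as Lemma~\ref{EI} (the last column of $((z_1,\dots,z_{n-1},z_0)^{-1})^*$ is $\dot x_0$) and invokes Perron's lemma for the constancy of $\langle\dot x_0(t),z_i(t)\rangle$, whereas you derive this constancy directly by differentiating the pairing and then argue via dimension counting on $\mathrm{span}\{z_2(0),\dots,z_n(0)\}^\perp$; these are equivalent reformulations of the same fact, and neither is materially simpler than the other.

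Two minor remarks on the write-up. First, the observation that $v_{\varepsilon_k}\neq 0$ does not need injectivity of $S$ near $0$: since $S(0)=\Omega(T,0,x_0(0))=x_0(0)$, the hypothesis $x_{\varepsilon_k}(\Delta_{\varepsilon_k})\neq x_0(0)$ means $S(v_{\varepsilon_k})\neq S(0)$, which forces $v_{\varepsilon_k}\neq 0$ outright. (Injectivity of $S'(0)$ is of course still needed where you use it, namely to ensure $S'(0)\hat v_0\neq 0$.) Second, the step ``exactly one $\rho_i$ equals $1$'' quietly uses that the $n$ eigenfunctions hypothesis makes the monodromy diagonalisable, so the geometric and algebraic multiplicities of $+1$ agree; it is worth saying this explicitly, since the nondegeneracy assumption is stated in terms of algebraic multiplicity.
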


To prove Lemma~\ref{conv_lem3} we need the following property of
eigenfunctions of (\ref{ss}).

\begin{lemma}\label{EI} Assume that $f\in C^1(\mathbb{R}^n,\mathbb{R}^n)$ and let $x_0$ be
a nondegenerate $T$-periodic limit cycle of (\ref{np}). Assume
that system (\ref{ss}) has $n$ linearly independent
eigenfunctions. Denote by $z_1,...,z_{n-1}$ the eigenfunctions of
(\ref{ss}) which are not $T$-periodic. Let ${z_0}$ be the
$T$-periodic eigenfunction of (\ref{ss}) such that
\begin{equation}\label{orient}
\left<\dot{{x}}_0(0),{z_0}(0)\right>=1.
\end{equation}
Then the last column of
$\left((z_1(t),...,z_{n-1}(t),{z_0}(t))^{-1}\right)^*$ is
$\,\dot{{x}}_0(t).$
\end{lemma}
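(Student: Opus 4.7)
The plan is to exploit the classical duality between solutions of the linearized system (\ref{ls}) and its adjoint (\ref{ss}): if $y$ solves (\ref{ls}) and $z$ solves (\ref{ss}), then $\langle y(t),z(t)\rangle$ is constant in $t$, since
\[
\tfrac{d}{dt}\langle y,z\rangle=\langle f'(x_0)y,z\rangle+\langle y,-(f'(x_0))^*z\rangle=0.
\]
I would apply this identity with $y=\dot{x}_0$, which is a genuine $T$-periodic solution of (\ref{ls}) because $x_0$ satisfies (\ref{np}).

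First, for each $i\in\{1,\dots,n-1\}$, take $z=z_i$. Since $z_i$ is a Floquet solution with multiplier $\rho_i\neq 1$ (otherwise $z_i$ would be $T$-periodic, contradicting its labeling), evaluating the conserved quantity at $t=0$ and at $t=T$ and using $\dot{x}_0(T)=\dot{x}_0(0)$ together with $z_i(T)=\rho_i z_i(0)$ gives
\[
\langle \dot{x}_0(0),z_i(0)\rangle=\langle \dot{x}_0(T),z_i(T)\rangle=\rho_i\langle \dot{x}_0(0),z_i(0)\rangle,
\]
hence $(1-\rho_i)\langle \dot{x}_0(0),z_i(0)\rangle=0$ and so $\langle \dot{x}_0(t),z_i(t)\rangle\equiv 0$ for all $t$. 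Next, take $z=z_0$: by the same conservation law, $\langle \dot{x}_0(t),z_0(t)\rangle$ is constant in $t$, and the normalization (\ref{orient}) pins this constant down to $1$, so $\langle \dot{x}_0(t),z_0(t)\rangle\equiv 1$.

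Finally, I would repackage these $n$ identities as a single matrix equation. Setting $W(t):=(z_1(t),\dots,z_{n-1}(t),z_0(t))$, the relations above read
\[
\dot{x}_0(t)^*\, W(t)=(0,\dots,0,1).
\]
The columns of $W(t)$ are $n$ solutions of (\ref{ss}) which are linearly independent as functions by hypothesis, and therefore linearly independent at every fixed $t$ (the evaluation map from the $n$-dimensional solution space to $\mathbb{R}^n$ is an isomorphism), so $W(t)$ is invertible. Hence $\dot{x}_0(t)^*=(0,\dots,0,1)\,W(t)^{-1}$, i.e.\ $\dot{x}_0(t)$ is the transpose of the last row of $W(t)^{-1}$, which is precisely the last column of $(W(t)^{-1})^*$.

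I do not anticipate any substantial obstacle; the only point requiring a little care is noting that the nondegeneracy of $x_0$ guarantees that the $+1$ multiplier of (\ref{ls}) is simple, so that only one of the $n$ eigenfunctions of (\ref{ss}) can be $T$-periodic, which makes the partition into $z_0$ and $z_1,\dots,z_{n-1}$ unambiguous and consistent with the hypothesis that $z_1,\dots,z_{n-1}$ all have multipliers different from $1$.
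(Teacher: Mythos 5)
Your proof is correct and follows essentially the same route as the paper's: both rest on the conservation of $\left<\dot{x}_0(t),z(t)\right>$ along solutions (which the paper attributes to Perron's lemma and you derive directly), use the Floquet-multiplier relation $z_i(T)=\rho_i z_i(0)$ with $\rho_i\neq1$ to force $\left<\dot{x}_0,z_i\right>\equiv0$, and then invoke the normalization (\ref{orient}) plus invertibility of $(z_1(t),\dots,z_{n-1}(t),z_0(t))$ to read off $\dot{x}_0(t)$ as the last column of the inverse transpose. The only cosmetic difference is that you carry the inner-product identities for all $t$ and write the conclusion as a single matrix identity, whereas the paper establishes them at $t=0$ and identifies $\dot{x}_0(0)$ with $y_n(0)$ against the basis $\{z_i(0)\}$; the content is the same.
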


\begin{proof} Let $\left(y_1(t),...,y_n(t)\right)=
\left((z_1(t),...,z_{n-1}(t),{z_0}(t))^{-1}\right)^*, \,t\in
\mathbb{R}.$ We want to show that $y_n(t)=\dot{{x}}_0(t), \,t\in
\mathbb{R}.$ Since
$$
\left(y_1(t),...,y_n(t)\right)^*
\left(z_1(t),...,z_{n-1}(t),{z_0}(t)\right)=I
$$
then \begin{equation}\label{OL} \left<y_n(t),z_i(t)\right>=0\quad
{\rm for\ any\ } i\in \overline{1,n-1}{\rm\ and\ }
\left<y_n(t),{z_0}(t)\right>=1, \end{equation} whenever $t\in
\mathbb{R}$. Let us show that
\begin{equation}\label{orient0}
  \left<\dot{{x}}_0(0),z_i(0)\right>=0,\qquad \mbox{for
  any}\quad
  i\in\overline{1,n-1}.
\end{equation}
Indeed, let $i\in\overline{1,n-1}.$ Since eigenfunctions $z_i$ are
not $T$-periodic then $\rho_i z_i(0)=z_i(T)$ for some $\rho_i\not=1$
that gives
$\rho_i\left<\dot{{x}}_0(0),z_i(0)\right>=\left<\dot{{x}}_0(T),z_i(T)\right>.$
On the other hand Perron's lemma \cite{perron} implies that
$\left<\dot{{x}}_0(0),z_i(0)\right>=\left<\dot{{x}}_0(T),z_i(T)\right>,$
thus $\left<\dot{{x}}_0(0),z_i(0)\right>=0$ and so (\ref{orient0})
holds true. But the vectors $z_1(0), z_2(0),...., {z_0}(0)$ form a
basis of $\mathbb{R}^n$, hence by (\ref{orient}), (\ref{OL}) and
(\ref{orient0}) we get $y_n(0)=\dot{ {x}}_0(0)$.
\end{proof}

\begin{proof}[Proof of Lemma~\ref{conv_lem3}.] Let
$z_1,...,z_{n-1}$ be linearly independent not $T$-periodic
eigenfunctions of (\ref{ss}). Let us show that there exists
$i_*\in\overline{1,n-1}$ such that the conclusion holds with $z$
replaced by $z_{i_*}.$ Assume the contrary, therefore there exist
$\{\varepsilon_k\}_{k\in\mathbb{N}}\subset(0,1],$
$\{\alpha_k\}_{k\in\mathbb{N}}\subset[-1,1]$ with
$\varepsilon_k\to 0$ and $\alpha_k\to 0$ as $k\to\infty$ such that
$$
\cos\angle\left(z_{i}(0),x_{\epsi_k}(\Delta_{\epsi_k})-x_0(0)\right)=\alpha_k\quad\mbox{for
any }i\in\overline{1,n-1}
$$
and
$$
  x_{\epsi_k}(\Delta_{\epsi_k})\not=x_0(0).
$$
Let $v_k\in\mathbb{R}^{n-1}$ be such that
$S(v_k)=x_{\epsi_k}(\Delta_{\epsi_k}).$ Therefore we have
$$
\frac{\left<z_i(0),S(v_k)-S(0)\right>}
{\left\|z_i(0)\right\|\cdot\left\|S(v_k)-S(0)\right\|}=\alpha_k\quad\mbox{for
any }i\in\overline{1,n-1},
$$
and so
\begin{equation}\label{ptl}
\frac{\left<z_i(0),\dfrac{S(v_k)-S(0)}{\|v_k\|}\right>}
{\left\|z_i(0)\right\|\cdot\left\|\dfrac{S(v_k)-S(0)}{\|v_k\|}\right\|}=\alpha_k\quad\mbox{for
any }i\in\overline{1,n-1}.
\end{equation}
Passing to a subsequence if necessary, we may assume that
$\dfrac{v_k}{\|v_k\|}\to q_0$ as $k\to\infty,$ thus $\|q_0\|=1.$
Taking the limit as $k\to\infty$ in (\ref{ptl}) we obtain
\begin{equation}\label{zvio}
  \left<z_i(0),S'(0)q_0\right>=0\quad\mbox{for any
}i\in\overline{1,n-1}.
\end{equation}
Since $q_0\not=0$ then $S'(0)q_0\not=0.$ Otherwise we would have
$\Omega'_\xi(T,0,x_0(0))A_{n-1}q_0=0,$  where $\Omega'_{\xi}$ is
nonsingular being a fundamental matrix to (\ref{ls}) (see
\cite{kraop}, Theorem 2.1). But this means that  $A_{n-1}q_0=0$
contradicting the definition of $A_{n-1}.$ Denoting by $z_0$ the
$T$-periodic eigenfunction of (\ref{ss}) satisfying (\ref{orient})
from (\ref{zvio}) we obtain
$$
 \left(z_1(t),...,z_{n-1}(t),{z_0}(t)\right)^*S'(0)q_0=\left(\begin{array}{c}
 0\\ \vdots \\ 0\\ a\end{array}\right)
$$
with $a\not=0.$ From this formula we have
$$
S'(0)q_0=\left(\left(z_1(t),...,z_{n-1}(t),{z_0}(t)\right)^*\right)^{-1}\left(\begin{array}{c}
 0\\ \vdots \\ 0\\ a\end{array}\right)
$$
and by Lemma~\ref{EI} we have $S'(0)q_0=a\dot x_0(0)$ contradicting
Lemma~\ref{conv_lem2}.
\end{proof}

\vskip0.5truecm By combining Theorem~\ref{conv_thm1} and
Lemma~\ref{conv_lem3} we have the following result.

\begin{corollary}\label{conv_cor6}
Assume all the conditions of Theorem~\ref{conv_thm1}. Assume that
the linearized system (\ref{ls}) has $n$ linearly independent
eigenfunctions. Assume that $M^\bot_z(0)=0$ for any not $T$-periodic
eigenfunction $z$ of the adjoint system (\ref{ss}). Then
$$
  \lim_{\epsi\to 0}\frac{\left\|x_\varepsilon(\Delta_\varepsilon)-{x}_0(0)\right\|}{\varepsilon}=0.
$$
\end{corollary}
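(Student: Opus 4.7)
The plan is to argue by contradiction, combining the scalar convergence statement of Theorem~\ref{conv_thm1} at the point $t=0$ with the quantitative angle estimate of Lemma~\ref{conv_lem3} to get a direct clash.

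First I would suppose the conclusion fails, so that there exist $c>0$ and a sequence $\epsi_k\to 0$ with
$$\frac{\|x_{\epsi_k}(\Delta_{\epsi_k})-x_0(0)\|}{\epsi_k}\ge c.$$
In particular $x_{\epsi_k}(\Delta_{\epsi_k})\neq x_0(0)$ for all large $k$, so Lemma~\ref{conv_lem3} applies for each such $k$ and yields a non $T$-periodic eigenfunction $z$ of (\ref{ss}) for which
$$\left|\cos\angle\!\left(z(0),\,x_{\epsi_k}(\Delta_{\epsi_k})-x_0(0)\right)\right|\ge\alpha_*>0.$$
The eigenfunction delivered by Lemma~\ref{conv_lem3} depends a priori on $k$, but by construction it is chosen from the finite collection $z_1,\dots,z_{n-1}$ of linearly independent non $T$-periodic eigenfunctions, so a pigeonhole argument allows me to pass to a subsequence (still denoted $\epsi_k$) along which the same eigenfunction $z=z_{i_*}$ works for every $k$.

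Next I would multiply the angle estimate by $\|z(0)\|\cdot\|x_{\epsi_k}(\Delta_{\epsi_k})-x_0(0)\|/\epsi_k$ to obtain
$$\frac{|\langle z(0),\,x_{\epsi_k}(\Delta_{\epsi_k})-x_0(0)\rangle|}{\epsi_k}\ge \alpha_*\,\|z(0)\|\cdot\frac{\|x_{\epsi_k}(\Delta_{\epsi_k})-x_0(0)\|}{\epsi_k}\ge \alpha_*\,\|z(0)\|\,c>0,$$
valid for all $k$ in the subsequence. On the other hand, Theorem~\ref{conv_thm1} applied at $t=0$ for this same $z$ gives
$$\lim_{k\to\infty}\frac{1}{\epsi_k}\langle z(0),\,x_{\epsi_k}(\Delta_{\epsi_k})-x_0(0)\rangle=M^\bot_z(0)=0$$
by the standing assumption that $M^\bot_z(0)=0$ for every non $T$-periodic eigenfunction. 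This contradicts the strictly positive lower bound above and completes the argument.

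The main obstacle I anticipate is the bookkeeping: Lemma~\ref{conv_lem3} guarantees an eigenfunction $z$ for each $\epsi$, but with a priori no continuity in $\epsi$. The key observation that unlocks the proof is that the finitely many non $T$-periodic Floquet eigenfunctions $z_1,\dots,z_{n-1}$ furnish a finite pool to choose from, so the non-uniformity in $\epsi$ is harmless once a subsequence is extracted. Everything else is a routine combination of the angle inequality with the scalar limit from Theorem~\ref{conv_thm1}; no new computation is required.
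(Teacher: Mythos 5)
Your argument is correct and follows the paper's own route: assume the ratio $\|x_{\epsi}(\Delta_{\epsi})-x_0(0)\|/\epsi$ stays bounded below along a sequence, invoke Lemma~\ref{conv_lem3} to obtain a non-$T$-periodic eigenfunction $z$ with the uniform angle bound $\alpha_*$, and play this against the scalar limit $\langle z(0),x_\epsi(\Delta_\epsi)-x_0(0)\rangle/\epsi\to M^\bot_z(0)=0$ from Theorem~\ref{conv_thm1}. The one place you go beyond the paper's terse writeup is in making explicit the pigeonhole/subsequence step needed because the eigenfunction supplied by Lemma~\ref{conv_lem3} can a priori vary with $\epsi$; the paper glosses over this, and your observation that $z$ is drawn from the finite pool $z_1,\dots,z_{n-1}$ so one may fix it along a subsequence is exactly the right way to close that gap.
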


\begin{proof} By contradiction assume that there exist
$\{\varepsilon_k\}_{k\in\mathbb{N}}\subset(0,1],$ $\epsi_k\to 0$
as $k\to \infty$  and $c_*>0$ such that
\begin{equation}\label{cor6}
  \frac{\left\|x_{\varepsilon_k}(\Delta_{\varepsilon_k})-{x}_0(0)\right\|}{\varepsilon_k}\ge c_*.
\end{equation}
From (\ref{cor6}) we have that the assumptions of
Lemma~\ref{conv_lem3} are satisfied, thus there exists a not
$T$-periodic eigenfunction $z$ of (\ref{ss}) such that
(\ref{lemma3}) holds, but this contradicts (\ref{mainprop1}). The
proof is complete.
\end{proof}

\vskip0.5truecm

Corollaries \ref{conv_cor3} and \ref{conv_cor6} allow us to
formulate the following result.

\begin{corollary}\label{conv_cor7} Assume all the conditions of
Theorem~\ref{conv_thm1}. Assume that system (\ref{ss}) has $n$
linearly independent eigenfunctions and let $\varepsilon_0>0$
sufficiently small. Then there exists a not $T$-periodic
eigenfunction $z$ of (\ref{ss}) such that either
$$
\cos\angle\left(z(0), x_\varepsilon(\Delta_\varepsilon)-
  {x}_0(0)
  \right)\not=0\quad\mbox{ for any }\;\varepsilon\in(0,\varepsilon_0]
$$
or
$$
\lim_{\epsi\to 0}\frac{
\left\|x_\varepsilon(\Delta_\varepsilon)-{x}_0(0)\right\|}{\varepsilon}=0.
$$
\end{corollary}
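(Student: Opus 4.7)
The plan is to prove the statement by a simple dichotomy on the behaviour of the scalar $M_z^\bot(0)$ as $z$ ranges over the not $T$-periodic eigenfunctions of (\ref{ss}), reducing everything to the already-proved Corollaries~\ref{conv_cor3} and \ref{conv_cor6}. First I would observe that the nondegeneracy of $x_0$, combined with the hypothesis that (\ref{ss}) admits $n$ linearly independent eigenfunctions, forces exactly one of them to be $T$-periodic (the one associated to the characteristic multiplier $+1$, which has algebraic multiplicity one), so there are $n-1\ge 1$ not $T$-periodic eigenfunctions available as candidates for $z$.

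\textbf{Case 1.} Suppose there exists a not $T$-periodic eigenfunction $z$ of (\ref{ss}) with $M_z^\bot(0)\not=0$. Theorem~\ref{conv_thm1} applied at $t=0$ gives
$$
\lim_{\epsi\to 0}\frac{1}{\epsi}\left<z(0),x_\epsi(\Delta_\epsi)-x_0(0)\right>=M_z^\bot(0)\not=0,
$$
so for $\epsi_0>0$ sufficiently small and every $\epsi\in(0,\epsi_0]$ this inner product is nonzero. In particular $x_\epsi(\Delta_\epsi)\not=x_0(0)$, the cosine is well defined on $(0,\epsi_0]$, and by Corollary~\ref{conv_cor3} (applied at $t=0$) it is of constant nonzero sign equal to that of $M_z^\bot(0)$. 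This delivers the first alternative.

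\textbf{Case 2.} Suppose $M_z^\bot(0)=0$ for every not $T$-periodic eigenfunction $z$ of (\ref{ss}). The hypothesis that (\ref{ss}) has $n$ linearly independent eigenfunctions is equivalent, via the adjoint relation $Z(t)=(Y(t)^*)^{-1}$ used in the proof of Theorem~\ref{conv_thm1}, to the same property for (\ref{ls}), so Corollary~\ref{conv_cor6} applies and yields
$$
\lim_{\epsi\to 0}\frac{\|x_\epsi(\Delta_\epsi)-x_0(0)\|}{\epsi}=0,
$$
which is the second alternative; the $z$ in the statement may be taken to be any of the $n-1$ not $T$-periodic eigenfunctions guaranteed above.

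The argument is essentially assembly of earlier results, and I expect no serious obstacle. The only point requiring a little care occurs in Case~1: the mere existence of a nonzero limit of $\epsi^{-1}\left<z(0),x_\epsi(\Delta_\epsi)-x_0(0)\right>$ guarantees nonvanishing of this quantity only for $\epsi$ smaller than some threshold, so one must shrink $\epsi_0$ once to obtain nonvanishing, and a second time to apply Corollary~\ref{conv_cor3}, in order to secure the uniform conclusion "for every $\epsi\in(0,\epsi_0]$". No new estimates are needed beyond those already proved.
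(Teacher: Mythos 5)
Your proof is correct and follows exactly the approach the paper intends: the paper states Corollary~\ref{conv_cor7} immediately after the remark ``Corollaries~\ref{conv_cor3} and \ref{conv_cor6} allow us to formulate the following result'' without writing out a proof, and the dichotomy on whether some not $T$-periodic eigenfunction $z$ has $M^\bot_z(0)\neq0$, feeding into Corollary~\ref{conv_cor3} (via Theorem~\ref{conv_thm1}) or Corollary~\ref{conv_cor6} respectively, is precisely that intended assembly. Your side remark identifying the hypothesis on (\ref{ss}) with that on (\ref{ls}) via the monodromy relation $Z(t)=(Y(t)^*)^{-1}$ is also a point the paper leaves implicit (it uses the two phrasings interchangeably in Lemma~\ref{conv_lem3} and Corollary~\ref{conv_cor6}), and you resolve it correctly.
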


Finally, observe that from Remark~\ref{LOUD} it follows that under
Malkin's or Loud's assumptions Corollaries~\ref{conv_cor3},
\ref{conv_cor4}, \ref{conv_cor5}, \ref{conv_cor6}, \ref{conv_cor7}
hold true with $\Delta_\epsi=0.$

\medskip

Received February 15, 2007; revised June 2007.

\medskip

\end{document}